\newtheorem{thm}{Theorem}
\newtheorem{lemma}{Lemma}
\newtheorem{defn}{Definition}
\newtheorem{asmp}{\textbf{Assumption}}
\newtheorem{remark}{Remark}
\newcommand{\reals}{\mathbb{R}}
\newcommand{\defeq}{\stackrel{\Delta}{=}}
\newcommand{\norm}[1]{\left \lVert#1\right \rVert}
\newcommand{\snorm}[1]{\left \lVert#1\right \rVert^{2}}
\newcommand{\set}[1]{\left\{#1\right\}}
\newcommand{\setc}[2]{\{#1\ |\ #2\}}
\newcommand{\bracket}[1]{\left( #1 \right)}
\DeclareMathOperator{\col}{col}
\DeclareMathOperator{\row}{row}
\DeclarePairedDelimiterX{\inp}[2]{\langle}{\rangle}{#1\ \middle| \ #2}
\DeclarePairedDelimiterX{\inpl}[2]{\langle}{.}{#1\ \middle| \ #2}
\DeclarePairedDelimiterX{\inpr}[2]{.}{\rangle}{#1\ \ #2}
\tikzstyle{block} = [draw, fill=white, rectangle, 
\tikzstyle{sum} = [draw, fill=white, circle, node distance=0.5cm, inner sep=0pt, minimum size=0.25cm]
\tikzstyle{input} = [coordinate]
\tikzstyle{output} = [coordinate]
\tikzstyle{pinstyle} = [pin edge={to-,thin,black}]
\tikzstyle{branch}=[fill,shape=circle,minimum size=3pt,inner sep=0pt]
\newcounter{num}
\def\num{\par\medskip\refstepcounter{num}\hangindent2em{\bfseries\arabic{num}}.\hspace{1em}\MakeUppercase}
\newenvironment{Numera}
{\parindent0pt\par\medskip}
{\setcounter{num}{0}\par\bigskip}
\title{Resilient Nash Equilibrium Seeking in the Partial Information Setting}
\author{Dian Gadjov and Lacra Pavel
\thanks{This work was supported by an NSERC Discovery Grant.}%
\thanks{\footnotesize D. Gadjov and L. Pavel are with Dept. of Electrical and Computer Engineering, University of Toronto, 
        {\tt\footnotesize dian.gadjov@mail.utoronto.ca},
        {\tt\footnotesize pavel@control.utoronto.ca}}%
}
\begin{document}

\maketitle
\thispagestyle{empty}
\pagestyle{empty}

\begin{abstract}
  Current research in distributed Nash equilibrium (NE) seeking in the partial information setting assumes that information is exchanged between agents that are ``truthful''. However, in general noncooperative games agents may consider sending misinformation to neighboring agents with the goal of further reducing their cost. Additionally, communication networks are vulnerable to attacks from agents outside the game as well as communication failures. In this paper, we propose a distributed NE seeking algorithm that is robust against adversarial agents that transmit noise, random signals, constant singles, deceitful messages, as well as being resilient to external factors such as dropped communication, jammed signals, and man in the middle attacks. The core issue that makes the problem challenging is that agents have no means of verifying if the information they receive is correct, i.e. there is no ``ground truth''. To address this problem, we use an observation graph, that gives truthful action information, in conjunction with a communication graph, that gives (potentially incorrect) information. By filtering information obtained from these two graphs, we show that our algorithm is resilient against adversarial agents and converges to the Nash equilibrium. 
\end{abstract}

\section{Introduction}
  Designing distributed NE seeking algorithms is currently an active research area. This is due the wide range of problems that can be formulated as a network scenario between self-interested agents. A small sample of problems that have been formulated as a distributed NE problem are: demand-side management for smart grids, \cite{Basar2012}, electric vehicles, \cite{PEVParise}, competitive markets, \cite{LiDahleh}, network congestion control, \cite{Garcia}, power control and resource sharing in wireless/wired peer-to-peer networks, cognitive radio systems, \cite{Scutari_2014}, etc.

  Classically, Nash equilibrium problems were solved for the \emph{full-decision information} setting, i.e., whereby all players have knowledge of every agent's decision/action \cite{FP07}. This requires a centralized system to disseminate the action information \cite{gramDR} or agents must transmit their action information to all other agents. However, there is a large class of problems where this assumption is unreasonable. For example, either when a central coordinator does not exist, or it is infeasible to have a centralized system, e.g., due to cost, computation burden, agent limitations, etc. In recent years, research has focused on the  \textit{partial-decision information} setting, where agents do not know the actions of the other agents, but may communicate with neighbouring agents \cite{dianCT}, \cite{nedich}, \cite{gram17}. Agents communicate to learn the true value of the actions of all agents in the network.

  All such existing algorithms make the implicit assumption that agents share information truthfully. For some problems, this assumption is reasonable because the agents are working for a common goal, the agents are guided by a central authority, or there are systems in place so that sending misinformation is impossible. Even in these settings, the agents are susceptible to external factors, such as communication failures, and communication attacks. Moreover, in general non-cooperative games, agents might have an incentive to not be truthful if deceiving others can minimize their cost further. Motivated by this problem, we propose a NE seeking algorithm that is robust against adversarial agents and communication failures/attacks.

In the consensus problem literature, there are algorithms designed to reach a consensus in settings where there are malicious agents under a wide variety of communication assumptions \cite{robust_consensus_0}, \cite{robust_consensus_1}, \cite{robust_consensus_2}, \cite{robust_consensus_3}, \cite{robust_consensus_4}, \cite{robust_consensus_5}, \cite{resilient_sensor_attack}. However, these algorithms are solely focused on just solving the consensus problem, which is just a component of the NE seeking process. These algorithms do not account for the optimization problem that each agent is trying to solve.

In the distributed optimization literature, ideas from the consensus problem have been applied to design resilient optimization algorithms. However, agents only converge to a convex combination of the local function minimizers and are unable to guarantee convergence to the optimal solution \cite{robust_opt}, \cite{robust_opt_ghar}, \cite{res_opt_2}, \cite{res_opt_3}. The reason for this shortcoming is that adversarial agents are indistinguishable from a normal agent with a modified (and valid) cost function \cite[Theorem 4.4]{robust_opt_ghar}.

In the game theory literature, malicious agents have been studied in various settings \cite{malice}, \cite{deception}. However, the focus of the research is on understanding the behavior of agents or designing mechanisms to remove the incentive to act maliciously. In the distributed NE seeking literature there are very few results that attempt to tackle this problem. The authors in \cite{resilient_NE}, propose a NE seeking algorithm against denial of service (DOS) attacks. However, the paper makes assumptions on the frequency and duration of attacks, as well as assuming that agents can detect when a communication link is attacked. While \cite{resilient_NE} does handle a specific external attack on the network, it does not deal with malicious agents in the game.

\textit{Contributions. } In this paper, we propose a novel distributed NE seeking algorithm that is robust to adversarial agents. The algorithm is resilient to both external factors, such as attacks from agents from outside the game and communication faults, and internal adversaries that transmit messages to neighboring agents with the intention of deceiving them to reduce their own cost. To achieve this we utilize a so called observation graph through which actions can be directly observed and is not susceptible to being tampered with. To the best of our knowledge this is the first such result in the literature.
As compared with \cite{cdc_version} this paper relaxes some of the assumptions, provides proofs for all results that were absent in \cite{cdc_version} due to space limitations, and a larger scale example.

The paper is organized as follows. Section \ref{sec:background} gives preliminary background. Section \ref{sec:formulation} formulates the problem. In Section \ref{sec:obs_graph} we introduce the observation graph and discuss properties of graphs. The proposed algorithm is presented in Section \ref{sec:algo} and the convergence analysis is presented in Section \ref{sec:conv}, numerical results are provided in Section \ref{sec:sim}, and concluding remarks are given in Section \ref{sec:conclusion}.

\subsection{Notation and Terminology} \label{sec:background}

Let $(\mathbf{1}_{n})$ $\mathbf{0}_{n}$ denote the vector of all (ones) zeros of dimension $n$, and $I_n$ the $n \times n$ identity matrix. To ease notation, we will drop the subscript when the dimension can be inferred. Given a set $C\subset S$ let $\bar{C} = S \setminus C$ denote the complement of set $C$. Given an ordered index set $\mathcal{I} = \set{1,2,\dots, n}$ let $\col(x_{i})_{i\in\mathcal{I}} \defeq [x_{1}^{T},x_{2}^{T},\dots,x_{n}^{T}]^{T}$ and $\row(x_{i})_{i\in\mathcal{I}} \defeq [x_{1}^{T},x_{2}^{T},\dots,x_{n}^{T}]$. Given matrices $A$ and $B$, we use $A \succ B$ ($A \succeq B$) to denote $A-B$ is positive definite (positive semidefinite). We denote the Euclidean norm of $x$ as $\norm{x}^2 \defeq \inp*{x}{x}$. For a given symmetric matrix $M\succeq 0$, let  $\inp*{x}{y}_{M} \defeq \inp*{x}{My}$ and  $\snorm{x}_{M} \defeq \inp*{x}{x}_{M}$, which is a norm when $M  \succ 0$. 
 
A directed graph is denoted by $\mathcal{G} = (\mathcal{N},\mathcal{E})$ where $\mathcal{N}$ is the set of nodes and $\mathcal{E} \subset \mathcal{N}\times \mathcal{N}$ is the set of edges. Let $e_{ij}\in \mathcal{E}$ denote an edge from node $j$ to node $i$. For graph $\mathcal{G}$, the set of in-neighbours to node $i$ is denoted $\mathcal{N}_{i}^{in}(\mathcal{G}) \defeq \setc{j}{e_{ij}\in\mathcal{E}}$ and the set of out-neighbours is $\mathcal{N}_{i}^{out}(\mathcal{G}) \defeq \setc{j}{e_{ji}\in\mathcal{E}}$. Additionally, the set of in-edges to node $i$ is denoted $\mathcal{E}_{i}^{in}(\mathcal{G}) \defeq \setc{e_{ij}}{e_{ij}\in\mathcal{E}}$ and the set of out-edges is $\mathcal{E}_{i}^{out}(\mathcal{G}) \defeq \setc{e_{ji}}{e_{ji}\in\mathcal{E}}$. A path from node $i$ to node $j$ is a sequence of nodes $v_{1},v_{2},\dots,v_{n}$ such that $v_{1}=i$, $v_{n}=j$ and $e_{v_{k+1},v_{k}}\in\mathcal{E}$ $\forall k\in \set{1,2,\dots,n-1}$. A graph is rooted at $i$ if there is a path from $i$ to all $j\in\mathcal{N}\setminus \set{i}$. A graph is strongly connected if there is a path from every node to every other node.

\section{Problem Formulation} \label{sec:formulation}

Consider a set of $N$ players/agents denoted $\mathcal{N}=\set{1,2,\dots,N}$. Each player $i\in\mathcal{N}$ selects an action $x_{i} = \col(x_{iq})_{q\in\set{1,2,\dots,n_{i}}}\in\Omega_{i} \subset \reals^{n_{i}}$, where $\Omega_{i}$ is the constraint set and $x_{iq}\in\reals$ is the $q^{th}$ component of agent $i$'s action. Let $x \defeq \col(x_{i})_{i\in\mathcal{N}}\in\Omega$ denote the action profile of all the players actions, where $\Omega \defeq \prod_{i\in\mathcal{N}}\Omega_{i} \subset \reals^{n}$ and $n = \sum_{i\in\mathcal{N}} n_{i}$. Additionally, we will denote $x = (x_{i},x_{-i})$ where $x_{-i}\in\Omega_{-i}=\prod_{j\in\mathcal{N}\setminus \set{i}}\Omega_{j}$ is the action of all players except for $i$. Each agent $i$ has a cost function $J_{i}: \Omega \to \reals$.  which we denote as $J_{i}(x_{i},x_{-i})$ to emphasize that the cost is dependent on $i$'s own and the other agents actions.

In a game $G$, agents have full information (FI) if for all agent $i$ knows $x_{-i}$ and agents have partial information (PI) otherwise. When agents have full information the problem that each agent tries to solve is,
\begin{align} \label{ne-fi}
\begin{split}
	\min_{x_{i}}&\quad J_{i}(x_{i},x_{-i})\\
	s.t. &\quad x_{i}\in\Omega_{i}
\end{split} \tag{NE-FI}
\end{align}

\begin{defn}
	Given a game $G$, an action profile $x^{*}=(x_{i}^{*}, x_{-i}^{*})$ is a Nash Equilibrium (NE) if $x^{*}$ solves \eqref{ne-fi} for all agents $i\in\mathcal{N}$. \hfill \QEDopen
\end{defn}

We make the following standard assumption used in distributed NE seeking problems, \cite{dianCT}, \cite{gram17}, that ensures that the NE of the game exists and is unique.
\begin{asmp} \label{asmp:f_and_F_conditions} The set $\Omega = \reals^{n}$. For each player $i\in\mathcal{N}$, given any $x_{-i}$, $J_{i}(x_{i}, x_{-i})$ is continuously differentiable, convex in $x_{i}$ and radially unbounded in $x_{i}$. The pseudo-gradient $F(x)$ is $\mu$-strongly monotone and $L$-Lipschitz, where $F(x) \defeq \col\bracket{\frac{\partial J_{i}(x)}{\partial x_{i}}}_{i\in\mathcal{N}}$. \hfill \QEDopen
\end{asmp}

\begin{remark} \label{remark:ne_char}
	Under Assumption \ref{asmp:f_and_F_conditions} the NE can be characterized as the action profile $x^{*}$ where the pseudo-gradient is $\mathbf{0}$, i.e., $F(x^{*}) = \mathbf{0}$.
\end{remark}

For Nash equilibrium problems, it is classically assumed that agents know the other agents' actions, i.e., agent $i$ knows $x_{-i}$ \cite{FP07}. However, in many scenarios this assumption is impractical. For example, consider a scenario where drones are ordered to arrange themselves into a certain configuration. It may be unreasonable to assume that each drone can know where every other drone is, since the distances between agents could be quite large.

If agents only have partial information then they cannot solve \eqref{ne-fi} because they do not know $x_{-i}$ fully. One way to try and resolve this issue is to have agents share information to neighboring agents over a communication graph $\mathcal{G}_{c}$. In the drone example, this could mean that drones have a transmitter that can only broadcast information over a small radius such that only nearby agents can receive this message. 

\begin{defn}
	A graph $\mathcal{G}_{c} = (\mathcal{N}, \mathcal{E}_{c})$ is a communication graph where the node set $\mathcal{N}$ is the set of agents and the edge set $\mathcal{E}_{c}$ describes the flow of information between agents. An edge $e_{ij} \in \mathcal{E}_{c}$ if agent $j$ gives information to agent $i$. \hfill \QEDopen
\end{defn}


In the partial information case, agent $i$ does not know $x_{-i}$ and uses $\mathcal{G}_{c}$ to learn $x_{-i}$. To do so, agent $i$ maintains a vector $\mathbf{x}^{i}\defeq\col(\mathbf{x}^{i}_{j})_{j\in\mathcal{N}}\in\reals^{n}$ where $\mathbf{x}^{i}_{j}\in\reals^{n_{j}}$ is agent $i$'s \emph{genuine} estimate/belief of agent $j$'s action. Furthermore, $\mathbf{x}^{i}_{j} = \col(\mathbf{x}^{i}_{jq})_{q\in \set{1,2,\dots,n_{j}}}$ where $\mathbf{x}^{i}_{jq}\in\reals$ is agent $i$'s genuine belief of the $q^{th}$ component of agent $j$'s action. Note that $\mathbf{x}^{i}_{i}$ is agent $i$'s estimate of their own action, since they know their own action $\mathbf{x}^{i}_{i} = x_{i}$. Ideally, all agents $i$ would like to use the communication graph to update their beliefs such that $\mathbf{x}^{i} = x$, i.e., their belief matches reality. We denote all the agents' estimates/beliefs stacked into a single vector as $\mathbf{x} \defeq \col(\mathbf{x}^{i})_{i\in\mathcal{N}}$. We denote the \emph{communicated message} that agent $j$ sends to agent $i$, via $\mathcal{G}_{c}$, about the action of agent $m$ as $\mathbf{y}^{ij}_{m} \in \reals^{n_{m}}$. Furthermore, $\mathbf{y}^{ij}_{m} = \col(\mathbf{y}^{ij}_{mq})_{q\in \set{1,2,\dots,n_{m}}}$ where $\mathbf{y}^{ij}_{mq}\in\reals$ is agent $j$'s communicated message to agent $i$ about the $q^{th}$ component of agent $m$'s action. Additionally, let $\mathbf{y}^{ij} = \col(\mathbf{y}^{ij}_{m})_{m\in\mathcal{N}} \in \reals^{n}$, $\mathbf{y}^{i} = \col(\mathbf{y}^{ij})_{j\in\mathcal{N}} \in \reals^{Nn}$, and without loss of generality $\mathbf{y}^{ii} = \mathbf{x}^{i}$, i.e., the message agent $i$ ``sends to himself'' is equal to agent $i$'s genuine belief. In general, agent $i$ will update its estimate/actions and communicate messages via,
\begin{align}\label{eqn:dyn_general}
\begin{split}
	\mathbf{x}^{i}[k+1] &= f_{i}(\mathbf{x}^{i}[k],\mathbf{y}^{i}[k])\\
	\mathbf{y}^{ji}[k+1] &= h_{ji}(\mathbf{x}^{i}[k],\mathbf{y}^{i}[k]) \quad \forall j\in \mathcal{N}_{i}^{out}(\mathcal{G}_{c})
\end{split}
\end{align}
where $[k]$ denotes the $k^{\text{th}}$ iteration of the algorithm. The function $f_{i}$ describes how agent $i$ updates its estimate, while $h_{ji}$ describes what information agent $i$ communicates to agent $j$. 

In the partial information setting agent $i$ can not solve \eqref{ne-fi} because they do not know $x_{-i}$. Therefore, consider the following problem for the partial information setting,
\begin{align} \label{ne-pi-t}
\begin{split}
	\min_{\mathbf{x}^{i}_{i}}&\quad J_{i}(\mathbf{x}^{i}_{i}, \mathbf{x}^{i}_{-i})\\
	s.t. &\quad \mathbf{x}^{i}_{i} \in\Omega_{i} \\
	&\quad \mathbf{x}^{i} = \mathbf{y}^{ij}\quad \forall j\in\mathcal{N}_{i}^{in}(\mathcal{G}_{c}) \\
	&\quad \mathbf{y}^{ji} = \mathbf{x}^{i}\quad \forall j\in\mathcal{N}_{i}^{out}(\mathcal{G}_{c})
\end{split} \tag{NE-PI-T}
\end{align}

\begin{remark} \label{remark:ne_partial_char}
	Similar to the full-decision information setting case (Remark \ref{remark:ne_char}), the NE can be characterizes as the action profile $\mathbf{x}^{*}$ where the \emph{extended pseudo-gradient} $\mathbf{F}(\mathbf{x}) \defeq \col\bracket{\frac{\partial J_{i}(\mathbf{x}^{i})}{\partial x_{i}}}_{i\in\mathcal{N}}$ is $\mathbf{0}$ and $\mathbf{x}$ is at consensus, e.g., $\mathbf{F}(\mathbf{x}^{*}) = \mathbf{0}$ and $\mathbf{x}^{*} = \mathbf{1}_{N}\otimes x^{*}$ where $x^{*}$ is the NE.
\end{remark}

\begin{lemma} \label{lemma:partial_opt}
	Given a game $G$, let $x^{*}$ be a solution to \eqref{ne-fi} for all agents $i\in\mathcal{N}$, i.e., $x^{*}$ is the NE. Assume that $\mathcal{G}_{c}$ is a strongly connected directed graph. If $\bar{\mathbf{x}} = (\bar{\mathbf{x}}^{1}, \cdots, \bar{\mathbf{x}}^{N})$ solves \eqref{ne-pi-t} for all agents $i\in\mathcal{N}$ then $\bar{\mathbf{x}} = \mathbf{1}_{N}\otimes x^{*}$ where $x^{*}$ and $\mathbf{x}^{i}_{i} = x^{*}_{i}$.
\end{lemma}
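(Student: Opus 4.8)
The plan is to prove the result in two stages: first, use feasibility together with the strong connectivity of $\mathcal{G}_{c}$ to force all agents' estimates to a common consensus value $z$; then use convexity and the first-order optimality conditions to identify $z$ as the unique zero of the pseudo-gradient, i.e. the NE $x^{*}$.

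For the consensus stage, I would exploit that the two families of constraints in \eqref{ne-pi-t} describe every communication edge from both of its endpoints. Fix an edge $e_{ij}\in\mathcal{E}_{c}$, so that $j\in\mathcal{N}_{i}^{in}(\mathcal{G}_{c})$ and $i\in\mathcal{N}_{j}^{out}(\mathcal{G}_{c})$. The in-neighbour constraint of agent $i$ gives $\bar{\mathbf{x}}^{i}=\bar{\mathbf{y}}^{ij}$, while the out-neighbour constraint of agent $j$, applied to its out-neighbour $i$, gives $\bar{\mathbf{y}}^{ij}=\bar{\mathbf{x}}^{j}$; chaining these yields $\bar{\mathbf{x}}^{i}=\bar{\mathbf{x}}^{j}$ across every edge. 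Since $\mathcal{G}_{c}$ is strongly connected there is a directed path between any ordered pair of nodes, so equality propagates along paths and all estimates coincide: $\bar{\mathbf{x}}^{i}=z$ for a common $z\in\reals^{n}$ and all $i\in\mathcal{N}$, i.e. $\bar{\mathbf{x}}=\mathbf{1}_{N}\otimes z$.

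For the optimality stage, I would use that under Assumption \ref{asmp:f_and_F_conditions} we have $\Omega=\reals^{n}$ and each $J_{i}$ is convex and continuously differentiable in $x_{i}$, so the minimization over $\mathbf{x}^{i}_{i}$ in \eqref{ne-pi-t} is an unconstrained convex program. At the consensus point the remaining components $\bar{\mathbf{x}}^{i}_{-i}=z_{-i}$ are fixed, so optimality of $\bar{\mathbf{x}}^{i}_{i}=z_{i}$ is equivalent to the stationarity condition $\frac{\partial J_{i}(z)}{\partial x_{i}}=\mathbf{0}$. Stacking this over $i\in\mathcal{N}$ gives $F(z)=\mathbf{0}$ (equivalently, the extended pseudo-gradient $\mathbf{F}$ evaluated at the consensus point reduces to $F(z)$). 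By the $\mu$-strong monotonicity of $F$ in Assumption \ref{asmp:f_and_F_conditions}, $F$ admits a unique zero, which by Remark \ref{remark:ne_char} is precisely the NE $x^{*}$; hence $z=x^{*}$, giving $\bar{\mathbf{x}}=\mathbf{1}_{N}\otimes x^{*}$ and $\bar{\mathbf{x}}^{i}_{i}=x^{*}_{i}$.

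The step I expect to be the main obstacle is justifying the reduction in the optimality stage, namely that the coupling consensus constraints do not artificially pin down $\mathbf{x}^{i}_{i}$ and thereby trivialize agent $i$'s minimization. One must argue that each agent's own action-estimate is a genuine degree of freedom, with the received messages $\mathbf{y}^{ij}$ tracking the senders' estimates rather than freezing $\mathbf{x}^{i}_{i}$, so that the interior first-order condition $\frac{\partial J_{i}}{\partial x_{i}}=\mathbf{0}$ is available. Once this is in place, the stacking into $F(z)=\mathbf{0}$ and the uniqueness of the zero of $F$ close the argument immediately.
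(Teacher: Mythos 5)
Your proposal is correct and follows essentially the same route as the paper: the in- and out-neighbour constraints across each edge of the strongly connected graph force $\bar{\mathbf{x}}^{i}=\bar{\mathbf{x}}^{j}$ for all $i,j$, and the problem then collapses to the full-information one. The only cosmetic difference is that you close the second stage via the stationarity condition $F(z)=\mathbf{0}$ and strong monotonicity, whereas the paper simply substitutes the consensus value back into \eqref{ne-pi-t} to recover \eqref{ne-fi}; under Assumption \ref{asmp:f_and_F_conditions} and Remark \ref{remark:ne_char} these are equivalent.
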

\begin{proof}
	Since $\mathcal{G}_{c}$ is a strongly connected directed graph, the last two constraints in \eqref{ne-pi-t} are equivalent to the condition that $\mathbf{x}^{i}=\mathbf{x}^{j}$ for all $i,j \in \mathcal{N}$. Using the fact that $\mathbf{x}^{i}_{i} = x_{i}$, this implies that $\mathbf{x}^{i}=\mathbf{x}^{j} = x$ for all $i,j \in \mathcal{N}$. Therefore, we can replace $\mathbf{x}^{i}_{i}$ with $x_{i}$ and $\mathbf{x}^{i}_{i}$ with $x_{-i}$ in \eqref{ne-pi-t} which gives \eqref{ne-fi}.
\end{proof}

\begin{remark}
Lemma \ref{lemma:partial_opt} shows that if each agent solves \eqref{ne-pi-t} then they will reach the NE. Notice that \eqref{ne-pi-t} introduces two additional constraints as compared with \eqref{ne-fi} which correspond to constraints on the communication between agents. The constraint $\mathbf{x}^{i} = \mathbf{y}^{ij}$ means that $i$'s estimate is in agreement with the communicated message of all its in-neighbours. This can be interpreted as a local consensus constraint. The constraint $\mathbf{y}^{ji} = \mathbf{x}^{i}$ means agent $i$'s communicated message to his out-neighbours is the same as its estimate, that is, agent $i$ is communicating honestly with its neighbours.
\end{remark}

While ideally we would like all agents to solve \eqref{ne-pi-t}, a self-interested agent has an incentive to ignore the second constraint (communicating honestly with its neighbours) if it would result in lowering their cost further. Thus, a self-interested agent would instead solve the following modified optimization problem.
\begin{align} \label{ne-pi-a}
\begin{split}
	\min_{\mathbf{x}^{i}_{i}}&\quad J_{i}(\mathbf{x}^{i}_{i},\mathbf{x}^{i}_{-i})\\
	s.t. &\quad x_{i}\in\Omega_{i} \\
	&\quad \mathbf{x}^{i}_{-i} = \mathbf{y}^{ij}_{-i}\quad \forall j\in \mathcal{N}_{i}^{in}(\mathcal{G}_{c}) 
\end{split} \tag{NE-PI-A}
\end{align}
Note, that \eqref{ne-pi-a} is obtained from \eqref{ne-pi-t} by removing the second constraint and modifying the first constraint. The constraint in \eqref{ne-pi-a} ensures that their belief about other agent's actions matches what their neighbors communicate. If the agent is lying about their own action then they do not want their action to match the lie, hence why $\mathbf{x}^{i}_{-i} = \mathbf{y}^{ij}_{-i}$ and not $\mathbf{x}^{i} = \mathbf{y}^{ij}$.

\begin{defn}
	Agent $i$ is truthful if it is solving \eqref{ne-pi-t} at each iteration $k$, and the data agent $i$ is communicating is
\begin{align} \label{eqn:truth_comm}
	\mathbf{y}^{ji}[k] &= \mathbf{x}^{i}[k]\quad \forall j \in \mathcal{N}_{i}^{out}(\mathcal{G}_{c})
\end{align}
	The agent is adversarial if it is solving \eqref{ne-pi-a} and/or  if the data it communicates does not satisfy  \eqref{eqn:truth_comm}. \hfill \QEDopen
\end{defn}
Assuming that truthful agents solve \eqref{ne-pi-t} and adversarial agents solve \eqref{ne-pi-a} only models agents in the game. It does not model external factors such as man in the middle attacks, broken communication devices, signal jammers, etc. While an agent might intend to be truthful, these external factors give the appearance that the agent is adversarial. The addition of the communication dynamics \eqref{eqn:truth_comm} is used to classify these external factors as adversarial. For example, consider that agent $i$ is solving \eqref{ne-pi-t} but $\mathbf{y}^{ji}[k] = c\mathbf{1}$, $\forall k=\set{0,1,\dots}$. This example could represent that agent $i$'s communication device is broken. 

\noindent\rule[0.5ex]{\linewidth}{1pt}
\textbf{Objective:} Given a game $G$, let $x^{*}$ denote the NE, and let $\mathcal{N} = \mathcal{T} \cup \mathcal{A}$ where $\mathcal{T}$ is the set of truthful agents and $\mathcal{A}$ is the set of adversarial agents. Design an algorithm such that $\mathbf{x}^{i}_{i} \to x^{*}_{i}$ for all $i\in\mathcal{N}$, when the set $\mathcal{A} \neq \emptyset$. 

\noindent\rule[0.5ex]{\linewidth}{1pt}
\vspace{-5mm}

In the literature, typical distributed NE seeking algorithms solve the problem where $\mathcal{A} = \emptyset$ and all agents satisfy \eqref{eqn:truth_comm}. This is the standard formulation for distributed NE seeking problems and by Lemma \ref{lemma:partial_opt} the solution is the NE.

\section{Observation, Robust, and Information Robust Graphs} \label{sec:obs_graph}

In this section, we first discuss the issues of the current problem formulation and then introduce the observation graph to deal with this problem. The remainder of the section will present various graph notions that are critical in the analysis of our proposed algorithm in the following section.

In the robust distributed optimization literature, it has been shown that in general it is impossible to detect an adversarial agent \cite[Theorem 4.4]{robust_opt_ghar}. This is because an adversarial agent's behavior is indistinguishable from a truthful agent with a modified (and valid) cost function. 

For the NE seeking problem in the partial information setting, there is additional information that could be used to detect adversarial agents. For example, agent $i$ receives a cost $J_{i}(x_{i},x_{-i})$ which depends on the actual actions played. If the cost received does not match the cost using the estimated actions, $J_{i}(x_{i},x_{-i}) \neq J_{i}(\mathbf{x}^{i}_{i},\mathbf{x}^{i}_{-i})$, then agent $i$ knows that the information they received is wrong. Therefore, agents would be able to detect that there is an adversarial agent, but they would not be able to identify which one of their opponents is adversarial\footnote{Unless there is only $1$ other agent or the game has strong assumptions on the cost functions and the communication graph}. 

The core issue is that agents have no way of verifying if the information they receive is correct. Additionally, if the information is determined to be incorrect there is no mechanism to get the correct information. Unfortunately, \emph{without additional assumptions it is impossible to solve this problem}. In order to deal with these issues, we assume that each agent $i$ can directly observe/measure some of actions $x_{j}$, without communication over $\mathcal{G}_{c}$.

\begin{defn}
	An \emph{observation graph} $\mathcal{G}_{o} = (\mathcal{N},\mathcal{E}_{o})$ describes what actions can be directly observed\footnote{In our setting, agent $i$ knows what action they used and therefore $e_{ii} \in \mathcal{E}_{o}$ for all $i\in\mathcal{N}$.}. If edge $e_{ij}\in \mathcal{E}_{o}$ then agent $i$ can directly observe/measure $x_{j}$.
	 \hfill \QEDopen
\end{defn}

Note that agents now get information from two different sources. Agents get information via the communication graph $\mathcal{G}_{c}$, which contains potentially incorrect information about \emph{all} agents actions, and the observation graph $\mathcal{G}_{o}$, which contains always correct \emph{local} information about neighbouring agents' actions.

  Continuing with the drone example, assume that the position of nearby drones can be measured (observation graph). The position of distant drones can not be measured directly, but the position information for those drones is obtained through a communication graph, which can be susceptible to  communication failures and signal interference. 

Unless $\mathcal{G}_{o}$ is a complete graph, agents will not fully know $x_{-i}$, i.e., agent $i$ does not know the actions of agents $\mathcal{N}\setminus \mathcal{N}_{i}^{in}(\mathcal{G}_{o})$. For agent $i$ to learn the actions of agents who are not observed, they will have to rely on the communication graph. Even if agents can directly observe a subset of the agents' actions the problem is not trivial. For example, consider that both $\mathcal{G}_{c}$ and $\mathcal{G}_{o}$ are given by the graph in Figure \ref{fig:Graph_O_asmp_not_trival}.

\vspace{2mm}
{\centering
\begin{minipage}[ht]{1\columnwidth}
\center
\begin{tikzpicture}
\def\Nnodes{3}
\def\radius{15mm}

\node[draw, circle, green!65!black] (n1) at (0,0) {1};
\node[draw, circle, red!90!black] (n2) at (1,1) {2};
\node[draw, circle, green!65!black] (n3) at (1,-1) {3};
\node[draw, circle, green!65!black] (n4) at (2,0) {4};
	
\path[<-,line width=0.20mm]
	(n1) edge (n2)
	(n1) edge (n3)
	(n2) edge (n4)
	(n3) edge (n4)
;

\end{tikzpicture}

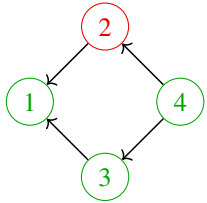
\captionof{figure}{$\mathcal{G}_{c}=\mathcal{G}_{o}$, adversary can not be identified}
\label{fig:Graph_O_asmp_not_trival}
\end{minipage}}
\vspace{0mm}

The red (green) node in the graph represents the adversarial (truthful) agent. For this observation graph agent $1$ can measure $x_{2}$ and $x_{3}$, agent $2$ can measure $x_{4}$, and agent $3$ can measure $x_{4}$. Therefore, agent $1$ does not know the action of agent $4$ but agent $1$'s neighbours know the action of agent $4$. Using the communication graph $\mathcal{G}_{c}$, agent $1$ will receive two messages $y^{1,2}_{4}$ and $y^{1,3}_{4}$ about agent $4$'s action, one truthful message and the other one an adversarial message. It is not obvious how agent $1$ can discern which of the two messages is truthful.

In the following, we analyze the properties of both the communication and observation graph. Under appropriate conditions on these two graphs we will later prove convergence of the proposed algorithm. The following definitions are used to describe how connected a subset of nodes to the rest of the graph is, and how connected a graph is.

\begin{defn}[ Def 4, \cite{robust_consensus_0}]
Given a graph $\mathcal{G}=(\mathcal{N}, \mathcal{E})$, a subset $S\subset \mathcal{N}$ is \emph{$r$-local} if $(\forall i\in \bar{S})$, $|\mathcal{N}^{in}_{i}(\mathcal{G})\cap S| \leq r$. \hfill \QEDopen
\end{defn}
	If the adversarial agent set $\mathcal{A}$ is $r$-local, then this means that the number of adversarial neighbours to a truthful agent $i\in\mathcal{T}$ is less than or equal to $r$. For example, in Figure \ref{fig:r_local_example} the $a_{i}$ node are elements in $\mathcal{A}$ and the numbered nodes are elements in $\mathcal{T}$. The number represents how many in-neighbours are in $\mathcal{A}$. In this example we can see that the most surrounded node has $3$ neighbours in $\mathcal{A}$, therefore the set $\mathcal{A}$ is $3$-local.

\vspace{2mm}
\noindent
\begin{minipage}[ht]{1\columnwidth}
\centering
\begin{tikzpicture}
\def\Nnodes{3}
\def\radius{15mm}

\node[draw, circle] (n1) at (0,0) {0};
\node[draw, circle] (n2) at (1,0) {0};
\node[draw, circle] (n3) at (2,0) {1};
\node[draw, circle, red!90!black] (n4) at (3,0) {$a_{1}$};
\node[draw, circle] (n5) at (4,0) {2};
\node[draw, circle] (n6) at (5,0) {1};
\node[draw, circle, red!90!black] (n7) at (6,0) {$a_{2}$};
\node[draw, circle] (n8) at (7,0) {1};
\node[draw, circle] (n9) at (0,1) {0};
\node[draw, circle] (n10) at (1,1) {1};
\node[draw, circle] (n11) at (2,1) {0};
\node[draw, circle, blue] (n12) at (3,1) {3};
\node[draw, circle, red!90!black] (n13) at (4,1) {$a_{3}$};
\node[draw, circle] (n14) at (5,1) {1};
\node[draw, circle] (n15) at (6,1) {2};
\node[draw, circle] (n16) at (7,1) {0};
\node[draw, circle] (n17) at (0,2) {1};
\node[draw, circle, red!90!black] (n18) at (1,2) {$a_{4}$};
\node[draw, circle] (n19) at (2,2) {2};
\node[draw, circle, red!90!black] (n20) at (3,2) {$a_{5}$};
\node[draw, circle] (n21) at (4,2) {2};
\node[draw, circle] (n22) at (5,2) {1};
\node[draw, circle, red!90!black] (n23) at (6,2) {$a_{6}$};
\node[draw, circle] (n24) at (7,2) {1};

\path[-,line width=0.20mm]
	(n1) edge (n2)
	(n2) edge (n3)
	(n3) edge (n4)
	(n4) edge (n5)
	(n5) edge (n6)
	(n6) edge (n7)
	(n7) edge (n8)
	(n9) edge (n10)
	(n10) edge (n11)
	(n11) edge (n12)
	(n13) edge (n14)
	(n14) edge (n15)
	(n15) edge (n16)
	(n17) edge (n18)
	(n18) edge (n19)
	(n19) edge (n20)
	(n20) edge (n21)
	(n21) edge (n22)
	(n22) edge (n23)
	(n23) edge (n24)
	(n1) edge (n9)
	(n2) edge (n10)
	(n3) edge (n11)	
	(n5) edge (n13)
	(n6) edge (n14)
	(n7) edge (n15)
	(n8) edge (n16)
	(n9) edge (n17)
	(n10) edge (n18)
	(n11) edge (n19)	
	(n13) edge (n21)
	(n14) edge (n22)
	(n15) edge (n23)
	(n16) edge (n24)
;

\path[-, line width=0.4mm]
	(n12) edge[blue] (n13)
	(n4) edge[blue] (n12)
	(n12) edge[blue] (n20)
;

\end{tikzpicture}
\captionof{figure}{Example of $\mathcal{A}$ being $3$-local}
\label{fig:r_local_example}
\end{minipage}
\vspace{0mm}

\begin{defn}
	Given a communication graph $\mathcal{G}_{c} = (\mathcal{N},\mathcal{E}_{c})$ and an observation graph $\mathcal{G}_{o} = (\mathcal{N},\mathcal{E}_{o})$, we say that node $i\in\mathcal{N}$ is \emph{$r$-information robust}\footnote{Node $i$ being $r$-information robust is related to the notion of an $r$-robust graph in \cite{robust_consensus_0}} if $(\forall S \supset \mathcal{N}^{out}_{i}(\mathcal{G}_{o}))$ $(\exists j\in \bar{S})$ such that $|\mathcal{N}^{in}_{j}(\mathcal{G}_{c})\cap S| \geq r$. \hfill \QEDopen
\end{defn}
A node $i$ being $r$-information robust describes how many paths are from node $i$ to every other node. For example, in Figure \ref{fig:i_is_r_info_robust} we construct any set $S$ (nodes in the rectangle) that contains node $i$ and agents that can directly observe $i$, i.e., nodes $o_{j}$. You can interpret the set $S$ as all nodes $s\in S$ that have a path from $i$ to $s$. Node $i$ is $r$-information robust if there is a node $j$ outside of $S$ that has at least $r$ edges connecting into $S$. In Figure \ref{fig:i_is_r_info_robust} node $i$ is $2$-information robust and since there is a node $j$ that has at least $2$ edges connecting into $S$.

\vspace{2mm}
\noindent
\begin{minipage}[ht]{1\columnwidth}
\centering
\begin{tikzpicture}
\def\Nnodes{3}
\def\radius{8mm}

\node[draw, circle, minimum size=\radius] (n1) at (0,0) {};
\node[draw, circle, minimum size=\radius] (n2) at (1,0) {};
\node[draw, circle, minimum size=\radius] (n3) at (2,0) {};
\node[draw, circle, minimum size=\radius] (n4) at (3,0) {$\color{magenta} j$};
\node[draw, circle, minimum size=\radius] (n5) at (4,0) {};
\node[draw, circle, minimum size=\radius] (n6) at (5,0) {};
\node[draw, circle, minimum size=\radius] (n7) at (6,0) {};
\node[draw, circle, minimum size=\radius] (n8) at (7,0) {};
\node[draw, circle, minimum size=\radius] (n9) at (0,1) {$\color{blue!90!black} o_{2}$};
\node[draw, circle, minimum size=\radius] (n10) at (1,1) {$\color{blue!90!black} o_{3}$};
\node[draw, circle, minimum size=\radius] (n11) at (2,1) {$\color{green!70!black} s_{1}$};
\node[draw, circle, minimum size=\radius] (n12) at (3,1) {$\color{green!70!black} s_{2}$};
\node[draw, circle, minimum size=\radius] (n13) at (4,1) {};
\node[draw, circle, minimum size=\radius] (n14) at (5,1) {};
\node[draw, circle, minimum size=\radius] (n15) at (6,1) {};
\node[draw, circle, minimum size=\radius] (n16) at (7,1) {};
\node[draw, circle, red, minimum size=\radius] (n17) at (0,2) {$i$};
\node[draw, circle, minimum size=\radius] (n18) at (1,2) {$\color{blue!90!black} o_{1}$};
\node[draw, circle, minimum size=\radius] (n19) at (2,2) {$\color{green!70!black} s_{3}$};
\node[draw, circle, minimum size=\radius] (n20) at (3,2) {$\color{green!70!black} s_{4}$};
\node[draw, circle, minimum size=\radius] (n21) at (4,2) {};
\node[draw, circle, minimum size=\radius] (n22) at (5,2) {};
\node[draw, circle, minimum size=\radius] (n23) at (6,2) {};
\node[draw, circle, minimum size=\radius] (n24) at (7,2) {};
	
\path[-,line width=0.20mm]
	(n1) edge (n2)
	(n2) edge (n3)
	(n3) edge (n4)
	(n4) edge (n5)
	(n5) edge (n6)
	(n6) edge (n7)
	(n7) edge (n8)
	(n9) edge (n10)
	(n10) edge (n11)
	(n11) edge (n12)
	(n12) edge (n13)
	(n13) edge (n14)
	(n14) edge (n15)
	(n15) edge (n16)
	(n17) edge (n18)
	(n18) edge (n19)
	(n19) edge (n20)
	(n20) edge (n21)
	(n21) edge (n22)
	(n22) edge (n23)
	(n23) edge (n24)
	(n1) edge (n9)
	(n2) edge (n10)
	(n3) edge (n12)
	(n5) edge (n13)
	(n6) edge (n14)
	(n7) edge (n15)
	(n8) edge (n16)
	(n9) edge (n17)
	(n10) edge (n18)
	(n11) edge (n19)
	(n12) edge (n20)
	(n13) edge (n21)
	(n14) edge (n22)
	(n15) edge (n23)
	(n16) edge (n24)
	(n1) edge (n10)
	(n2) edge (n11)
	(n4) edge (n13)
	(n5) edge (n14)
	(n6) edge (n15)
	(n7) edge (n16)
	(n9) edge (n18)
	(n10) edge (n19)
	(n11) edge (n20)
	(n12) edge (n21)
	(n13) edge (n22)
	(n14) edge (n23)
	(n15) edge (n24)
	(n2) edge (n9)
	(n3) edge (n10)
	(n3) edge (n11)
	(n5) edge (n12)
	(n6) edge (n13)
	(n7) edge (n14)
	(n8) edge (n15)
	(n10) edge (n17)
	(n11) edge (n18)
	(n12) edge (n19)
	(n13) edge (n20)
	(n14) edge (n21)
	(n15) edge (n22)
	(n16) edge (n23)
;

\path[-,line width=0.20mm]
	(n4) edge (n11)
	(n4) edge (n12)
;

\path[-,line width=0.40mm, magenta]
	(n4) edge (n11)
	(n4) edge (n12)
;

\draw[line width=0.4mm, green!70!black] (-0.5,0.5) rectangle (3.5,2.5);

\end{tikzpicture}
\captionof{figure}{Example of node $i$ being $2$-information robust}
\label{fig:i_is_r_info_robust}
\end{minipage}
\vspace{0mm}

The following lemma shows that if node $i$ is $r$-information robust then if at each node $r-1$ in-edges are removed the resulting graph is rooted at $i$. 

\begin{lemma} \label{lemma:rooted_edge_removed}
	Given a communication graph $\mathcal{G}_{c} = (\mathcal{N},\mathcal{E}_{c})$ and an observation graph $\mathcal{G}_{o} = (\mathcal{N},\mathcal{E}_{o})$, assume that node $i$ is $r$-information robust. Let $\tilde{\mathcal{E}}_{c}$ be the set of edges after $(r-1)$ in-edges from each node $j\in\mathcal{N}$ have been removed from $\mathcal{E}_{c}$. Then, the graph $\mathcal{G} = (\mathcal{N},\tilde{\mathcal{E}}_{c}\cup \mathcal{E}_{o})$ is rooted at $i$. \hfill \QEDopen
\end{lemma}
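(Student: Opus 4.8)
The plan is to argue by contradiction using the set of nodes reachable from $i$ in the combined graph $\mathcal{G} = (\mathcal{N}, \tilde{\mathcal{E}}_{c}\cup \mathcal{E}_{o})$. I would define $S\subseteq\mathcal{N}$ to be the reachability closure of $i$, i.e. the set of all nodes $s$ for which there is a directed path from $i$ to $s$ in $\mathcal{G}$. By definition $\mathcal{G}$ is rooted at $i$ precisely when $S=\mathcal{N}$, so it suffices to show $\bar{S}=\emptyset$. I would assume for contradiction that $\bar{S}\neq\emptyset$ and then exhibit a node of $\bar{S}$ that is in fact reachable from $i$.

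First I would record two structural facts about $S$. Since observation edges are never deleted, $\mathcal{E}_{o}\subseteq \tilde{\mathcal{E}}_{c}\cup\mathcal{E}_{o}$; and because $e_{ji}\in\mathcal{E}_{o}$ is a directed edge from $i$ to $j$ for every $j\in\mathcal{N}^{out}_{i}(\mathcal{G}_{o})$, node $i$ reaches each of its observers in one step. Together with $e_{ii}\in\mathcal{E}_{o}$ (so $i\in S$), this gives $S\supseteq \mathcal{N}^{out}_{i}(\mathcal{G}_{o})$. Hence $S$ is an admissible argument for the $r$-information-robustness property of $i$, and applying that property to $S$ yields a node $j\in\bar{S}$ with $|\mathcal{N}^{in}_{j}(\mathcal{G}_{c})\cap S|\geq r$, i.e. $j$ has at least $r$ communication in-edges whose tails lie in $S$.

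The crux is then a counting argument on the deleted edges. By construction $\tilde{\mathcal{E}}_{c}$ removes at most $r-1$ in-edges at node $j$, whereas $j$ has at least $r$ in-edges originating in $S$; therefore at least one edge $e_{js}$ with $s\in S$ survives in $\tilde{\mathcal{E}}_{c}$. Concatenating a directed path from $i$ to $s$ (which exists because $s\in S$) with this surviving edge, traversed $s\to j$, produces a directed path from $i$ to $j$ in $\mathcal{G}$, so $j\in S$, contradicting $j\in\bar{S}$. This forces $\bar{S}=\emptyset$ and hence $S=\mathcal{N}$, which is the claim.

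I expect the main obstacle to be bookkeeping rather than any deep idea: carefully matching the edge-orientation conventions (that $e_{js}$ is an in-edge of $j$ traversed from $s$ to $j$, and that $\mathcal{N}^{out}_{i}(\mathcal{G}_{o})$ are exactly the observers of $i$ that $i$ reaches in $\mathcal{G}_{o}$), confirming that $S$ genuinely contains $\mathcal{N}^{out}_{i}(\mathcal{G}_{o})$ so the robustness definition applies, and verifying the pigeonhole step in all cases, including when $j$ has fewer than $r-1$ in-edges in total, in which case even more $S$-edges survive. A secondary point worth stating explicitly is that the robustness property is only informative when $\bar{S}\neq\emptyset$, which is precisely the contradiction hypothesis, so no vacuous-case gap arises.
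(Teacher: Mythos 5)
Your proof is correct and follows essentially the same route as the paper's: both take the set of nodes reachable from $i$ in $\mathcal{G}$ (the paper's $Z(i)$ is your $S$), note it contains $\mathcal{N}^{out}_{i}(\mathcal{G}_{o})$ so that $r$-information robustness applies, and use the pigeonhole observation that a node $j\in\bar{S}$ with at least $r$ in-edges from $S$ retains one after at most $r-1$ deletions, contradicting $j\notin S$. Your formulation is marginally tidier in that defining $S$ as the full reachability closure yields the contradiction in one step rather than the paper's ``repeat this process'' iteration, but the underlying argument is identical.
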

\begin{proof}
Since $\mathcal{E}_{o}$ is contained in the edge set of $\mathcal{G}$, there is an edge from node $i$ to the nodes $\mathcal{N}^{out}_{i}(\mathcal{G}_{o})$. Let $X(i)$ denote all nodes in $\mathcal{G}$ that have a path from $\mathcal{N}^{out}_{i}(\mathcal{G}_{o})$ to them. Therefore, $Z(i) = \mathcal{N}^{out}_{i}(\mathcal{G}_{o})\cup X(i)$ are all nodes that have a path from $i$. 
	
\begin{minipage}[ht]{1\columnwidth}
\center
\begin{tikzpicture}
\def\Nnodes{3}
\def\radius{15mm}
\def\ang{30}

\node[circle,fill=black,inner sep=0pt,minimum size=3pt,label=below:{$i$}] (r) at (0,-0.5) {};
\node[ellipse, minimum width=2mm, minimum height=20mm, draw] (I) at (0,0) {$\mathcal{N}^{out}_{i}(\mathcal{G}_{o})$};
\node[ellipse, minimum width=2mm, minimum height=20mm, draw] (Y) at (2.60,0) {$X(i)$};
\node[draw, ellipse, minimum width=50mm, minimum height=30mm] (Z) at (1.1,0) {};
\node (z) at (1.4,1.1) {$Z(i)$};

\node[circle,fill=black,inner sep=0pt,minimum size=3pt,label=right:{$j$}] (p) at (4.9,0.9) {};
\node[draw, ellipse, minimum width=5mm, minimum height=30mm] (W) at (5,0) {$\bar{Z}(i)$};
	
\path[->,line width=0.20mm]
	(I) edge[bend right=\ang] (Y)
	(I) edge[bend right=-\ang] (Y)
	(Z) edge[bend right=\ang, dashed] (W)
	(Z) edge[bend right=-\ang, dashed] (W)
	(Z) edge[bend right=15, dashed] (W)
	(Z) edge[bend right=-15, dashed] (W)
	(Z) edge[bend right=-45] (p)
;

\end{tikzpicture}

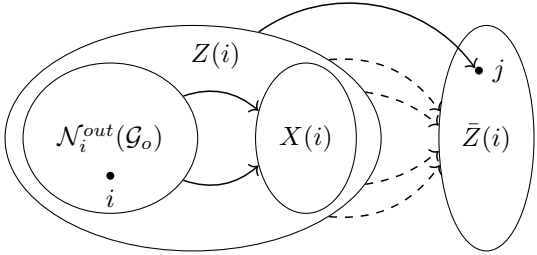
\captionof{figure}{State of node $i$ throughout network}
\label{fig:info_spread}
\end{minipage}
\vspace{2mm}
	
	We will show that $Z(i) = \mathcal{N}$. Assume that the set $\bar{Z}(i)$ is not empty and is depicted in Figure \ref{fig:info_spread}. Note that the set $Z(i) \supset \mathcal{N}^{out}_{i}(\mathcal{G}_{o})$ and by the assumption that node $i$ is $r$-information robust there must be a node $j\in \bar{Z}(i)$ that has at least $r$ edges in $\mathcal{E}_{c}$ connecting from $Z(i)$ to $\bar{Z}(i)$. After removing $(r-1)$ in-edges from node $j$ there must be at least one edge going from $Z(i)$ to $\bar{Z}(i)$. Therefore, node $j$ should have been in $Z(i)$ since there is a path from $i$ to $j$. Repeating this process will remove all nodes from $\bar{Z}(i)$. Therefore, our assumption that $\bar{Z}(i)$ is not empty is false, $Z(i) = \mathcal{N}$, and there is a path from $i$ to any node on the graph, i.e., $\mathcal{G}$ is rooted at $i$.
\end{proof}

\section{Proposed Algorithm} \label{sec:algo}

In this section we present our proposed algorithm for NE seeking with truthful and adversarial agents. We make the following assumption about the number of adversarial agents that communicate with truthful agents.
\begin{asmp} \label{asmp:d_local}
	For the communication graph $\mathcal{G}_{c}$ the set $\mathcal{A}$ is a $D$-local set and $(\forall i \in\mathcal{T})$, $|\mathcal{N}^{in}_{i}(\mathcal{G}_{c})| \geq 2D+1$. \hfill \QEDopen
\end{asmp}
	This assumption states that around every truthful agent there should be more truthful agents than adversarial ones, i.e., that a truthful agent $i$ receives communicated messages from at most $D$ adversarial agents and at least $D+1$ truthful neighbours.

 The algorithm is based on the idea of using some intermediary estimate denoted by $\mathbf{v}^{i}$. In constructing $\mathbf{v}^{i}$, agent $i$ uses the actions that can be directly observed via $\mathcal{G}_{o}$ into the estimate, while relying on communicated messages from neighbours for those actions that cannot be directly observed via $\mathcal{G}_{c}$. In constructing $\mathbf{v}^{i}$ for actions that are not directly observed, agent $i$ removes/prunes extreme data values received from neighbours and takes a weighted average on the remaining ones. After the estimate $\mathbf{v}^{i}$ is constructed it is used by agent $i$ in a gradient step to update their own action towards minimizing their cost.

\newpage

\noindent\rule[0.5ex]{\linewidth}{1pt}
\vspace{-7.5mm}

\noindent\rule[0.5ex]{\linewidth}{1pt}
\textbf{Algorithm 1:} Robust NE Seeking Algorithm 

\noindent\rule[0.5ex]{\linewidth}{1pt}
\vspace{-7mm}
\begin{Numera}
\num At any iteration $k$, each agent $i\in\mathcal{N}$ sends $\mathbf{y}^{ji}[k]$ to all $ j\!\in\!\mathcal{N}_{i}^{out}(\mathcal{G}_{c})$, and receives $\mathbf{y}^{ij}[k]$ from all $j\!\in \!\mathcal{N}_{i}^{in}(\mathcal{G}_{c})$. 

\num For each $m\in\mathcal{N}$ and $q\in\set{1,2,\dots,n_{m}}$, agent $i\in\mathcal{T}$ removes the $D$ highest and $D$ smallest values $\mathbf{y}^{ij}_{mq}$ that are larger and smaller than its own value $\mathbf{x}^{i}_{mq}$, respectively (ties broken arbitrarily). If there are fewer than $D$ values higher (resp. lower) than its own value, $i$ removes all of those values. Let $\mathcal{Y}^{i}_{mq}$ be the set of agents whose message about the $q^{th}$ component of $m$'s action is retained by $i$ and agent $i$ itself. Then, $\forall q\in\set{1,2,\dots, n_{m}}$

\begin{align} \label{eqn:comm_step}
\mathbf{v}^{i}_{mq}[k] &= \begin{cases}
	\underset{j\in\mathcal{Y}^{i}_{mq}}{\sum}  w^{ij}_{mq}[k]\mathbf{y}^{ij}_{mq}[k] & \text{if } m\not\in \mathcal{N}_{i}^{in}(\mathcal{G}_{o}) \\
	x_{mq}[k] & \text{else }
\end{cases}
\end{align}
where the weights $w^{ij}_{mq}$ are such that for all $i,j\in\mathcal{N}$, $\sum_{m\in\mathcal{N}} w^{ij}_{mq}[k] = 1$; for all $i\in\mathcal{N}$ and $j \not\in \mathcal{Y}^{i}_{mq}$, $w^{ij}_{mq}[k] = 0$; and there $\exists \eta > 0$, $\forall i\in\mathcal{N}$ and $j \in \mathcal{Y}^{i}_{mq}$, $w^{ij}_{mq}[k] \geq \eta$.

\num Agent $i\in\mathcal{T}$ updates its estimate vector as
\begin{align*}
	\mathbf{x}^{i}[k+1] &= \mathbf{v}^{i}[k] - \alpha \mathcal{R}^{T}_{i}\frac{\partial J_{i}(\mathbf{v}^{i}[k])}{\partial x_{i}}
\end{align*}
where $\mathcal{R}_{i} = [\mathbf{0}_{n_{i}\times n^{<}_{i}}\ I_{n_{i}}\ \mathbf{0}_{n_{i} \times n^{>}_{i}}]$, $n^{<}_{i} = \sum_{\substack{j<i}}n_{j}$, $n^{>}_{i} = \sum_{\substack{j>i}}n_{j}$, $\mathbf{v}^{i}_{m} = \col(\mathbf{v}^{i}_{mq})_{q\in\set{1,2,\dots,n_{m}}}$, and $\mathbf{v}^{i} = \col(\mathbf{v}^{i}_{m})_{m\in\mathcal{N}}$.
\num Agent $i\in\mathcal{T}$ updates its communicated message $\mathbf{y}^{ji}[k+1]=\mathbf{x}^{i}[k+1]$ 
\end{Numera}
\vspace{-5mm}
\noindent\rule[0.5ex]{\linewidth}{1pt}
\vspace{-7.5mm}

\noindent\rule[0.5ex]{\linewidth}{1pt}

In the above, $\mathbf{v}^{i}_{mq}$ is the intermediate estimate of agent $i$ on the $q^{th}$ component of agent $m$'s action, while $w^{ij}_{mq}\in [0,1]$ is the weight that agent $i$ places on information from agent $j$ about the $q^{th}$ component of agent $m$'s action. Step 2 is a filtering process, where for each message $\mathbf{y}^{ij}_{mq}$ removed, the corresponding edge in $ \mathcal{G}_{c}$ as used by agent $i$ is removed (set $w^{ij}_{mq} = 0$). Notice that the filtering process results in an algorithm with a switching/time-varying communication graph.

Step 2 of the algorithm is inspired by \cite{robust_opt_ghar}, which used this type of pruning for a distributed optimization problem. The major difference between the pruning step \cite{robust_opt_ghar} and ours, is the use of the observation graph that allows us to obtain stronger convergence results. In \cite{robust_opt_ghar}, the authors only focused on finding the minimizer of the set of truthful agents in the network and ignored the actions of the adversarial agents. For the Nash equilibrium problem, the agents' cost functions are dependent on the actions of the adversarial agents and therefore can not be ignored. We seek to find the NE, $x^{*}$, which includes both truthful and adversarial agent's actions. In \cite{robust_opt_ghar}, a solution in the convex hull of the local minimizers is found but this does not guarantee to be a global minimizer. Herein, we wish to converge to the true NE and not just within a neighborhood of the NE. Lastly, \cite{robust_opt_ghar} ignores the dynamics of the adversarial agents. Herein, every $j\in\mathcal{A}$ has a optimization problem it wants to solve, which is coupled to the actions/estimates of the others. Therefore, we also need to model how adversarial agents update their estimates/actions. 

 Since, adversarial agents are motivated to solve \eqref{ne-pi-a}, use their messages to deceive other agents, and to avoid being deceived, we make the following assumption that characterizes this behavior.

\begin{asmp} \label{asmp:adversary_dynamics}
For each $i\in \mathcal{A}$, agent $i$ can update their estimate $\mathbf{v}^{i}$ as a weighted average of only truthful agents messages and agent $i$ updates $\mathbf{x}^{i}$ via step 3 of Algorithm 1. \hfill \QEDopen
\end{asmp}
\begin{remark}
	We justify Assumption \ref{asmp:adversary_dynamics} by first noting that  each agent $i\in \mathcal{A}$ is solving an optimization problem and using gradient descent type dynamics is a common method used. 
	The assumption that agent $i$ updates ``$\mathbf{v}^{i}$ as a function of just truthful agents messages'' obviously holds when $|\mathcal{A}|=1$. When $|\mathcal{A}| > 1$, we argue that each agent $i\in\mathcal{A}$ is trying to solve its own optimization problem, and therefore needs to know what the actions of the other agents are. It only seems reasonable that if agent $i$ is trying to deceive the other agents, that agent $i$ would also protect itself from being deceived by others.	Here are some instances where this assumption holds: 
\begin{itemize}
\item If a set of agents create a coalition, $C$ and spread misinformation to agents in $\bar{C}$. Then, all the agents in $C$ would know which information can be trusted.
\item If the adversarial agents are spread out and do not directly communicate to each other.
\item If adversarial agents are able to get access to all the actions $x_{j}$ directly. 
\item If agent $i$'s cost function is $J_{i}(x) = 0$ and only wants to disrupt the system.
\item If agent $i$ uses Step 2 of Algorithm 1 to filter out other adversarial agents as a method to protect itself.
\end{itemize}
\end{remark}

\section{Convergence Analysis} \label{sec:conv}

To prove that Algorithm 1 converges to the NE, we first show that Step 2 of the algorithm can be equivalently written in terms of truthful agents only. 

\begin{lemma} \label{lemma:only_truth_equiv}
	Consider the observation graph $\mathcal{G}_{o}$, and communication $\mathcal{G}_{c}$ under Assumption \ref{asmp:d_local}. Then, $(\forall m\in \mathcal{N})$ $(\forall q\in \set{1,2,\dots,n_{m}})$ $(\forall i \in\mathcal{T})$, \eqref{eqn:comm_step} can be written as,
\begin{align} \label{eqn:equiv_comm_step}
\mathbf{v}^{i}_{mq}[k] &= \begin{cases}
	\underset{j\in \tilde{\mathcal{Y}}^{i}_{mq}}{\sum}  \tilde{w}^{ij}_{mq}[k]\mathbf{y}^{ij}_{mq}[k] & \text{if } m\not\in \mathcal{N}_{i}^{in}(\mathcal{G}_{o}) \\
	x_{mq}[k] & \text{else }
\end{cases}
\end{align}
where $\tilde{w}^{ij}_{mq}$ are new weights and $\tilde{\mathcal{Y}}^{i}_{mq} \subset \bracket{\mathcal{N}^{in}_{i}(\mathcal{G}_{c})\cap \mathcal{T}}\cup \set{i}$ are truthful agents whose message is retained. The weights $\tilde{w}^{ij}_{mq}$ are such that for all $i,j\in\mathcal{N}$, $\sum_{m\in\mathcal{N}} \tilde{w}^{ij}_{mq}[k] = 1$; for all $i\in\mathcal{N}$ and $j \not\in \tilde{\mathcal{Y}}^{i}_{mq}$, $\tilde{w}^{ij}_{mq}[k] = 0$; $\forall i\in\mathcal{N}$ and $j \in \tilde{\mathcal{Y}}^{i}_{mq}$, $\tilde{w}^{ij}_{mq}[k] > 0$; and $\forall i\in\mathcal{N}$ there is at least $|\mathcal{N}^{in}_{i}(\mathcal{G}_{c})| - 2D$ nodes $j\in \tilde{\mathcal{Y}}^{i}_{mq}$ with $\tilde{w}^{ij}_{mq} \geq \frac{\eta}{2}$. \hfill \QEDopen
\end{lemma}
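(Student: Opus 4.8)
The plan is to prove the two branches of \eqref{eqn:comm_step} separately. The observed branch $m\in\mathcal{N}^{in}_i(\mathcal{G}_o)$ is immediate: there $\mathbf{v}^i_{mq}[k]=x_{mq}[k]$ is the genuine action of $m$, with no dependence on any communicated (possibly adversarial) message, so it already has the form of the second case of \eqref{eqn:equiv_comm_step}. All the content is in the unobserved branch, where $\mathbf{v}^i_{mq}[k]=\sum_{j\in\mathcal{Y}^i_{mq}}w^{ij}_{mq}\mathbf{y}^{ij}_{mq}$ is a convex combination whose retained index set $\mathcal{Y}^i_{mq}$ may contain adversarial agents. The idea is to rewrite each retained adversarial term as a convex combination of honest quantities and then redistribute its weight, so that the resulting average is supported only on $\bigl(\mathcal{N}^{in}_i(\mathcal{G}_c)\cap\mathcal{T}\bigr)\cup\{i\}$.

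The technical core is a \emph{sandwiching} claim. Consider a retained adversarial value with, say, $\mathbf{y}^{ij}_{mq}>\mathbf{x}^i_{mq}$. In Step 2 agent $i$ discards the $D$ largest values lying above its own, each of which is therefore $\ge\mathbf{y}^{ij}_{mq}$. Since $\mathcal{A}$ is $D$-local (Assumption \ref{asmp:d_local}), agent $i$ has at most $D$ adversarial in-neighbours in total; as the adversary $j$ is itself retained, the $D$ discarded top values together with $j$ form $D+1$ distinct neighbours, so at least one discarded top value, say $\mathbf{y}^{it}_{mq}$ with $t$ truthful, satisfies $\mathbf{y}^{it}_{mq}\ge\mathbf{y}^{ij}_{mq}>\mathbf{x}^i_{mq}$. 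Hence $\mathbf{y}^{ij}_{mq}\in[\mathbf{x}^i_{mq},\mathbf{y}^{it}_{mq}]$ and can be written as $\lambda\mathbf{x}^i_{mq}+(1-\lambda)\mathbf{y}^{it}_{mq}$ for some $\lambda\in[0,1]$. The symmetric argument pairs each retained adversarial value below $\mathbf{x}^i_{mq}$ with a discarded truthful value from the bottom block; the boundary cases, where fewer than $D$ values lie above (resp.\ below) the own value and all are discarded, leave no retained adversary on that side and are vacuous.

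With the sandwiching in hand, I would replace each adversarial term $w^{ij}_{mq}\mathbf{y}^{ij}_{mq}$ by $w^{ij}_{mq}\bigl(\lambda\mathbf{x}^i_{mq}+(1-\lambda)\mathbf{y}^{it}_{mq}\bigr)$, folding the mass $w^{ij}_{mq}\lambda$ onto the truthful self-index $i$ and $w^{ij}_{mq}(1-\lambda)$ onto the truthful index $t$. Collecting terms yields \eqref{eqn:equiv_comm_step} with $\tilde{\mathcal{Y}}^i_{mq}\subset\bigl(\mathcal{N}^{in}_i(\mathcal{G}_c)\cap\mathcal{T}\bigr)\cup\{i\}$ and nonnegative weights $\tilde{w}^{ij}_{mq}$. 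Because the substitution only moves mass between indices, the normalisation is preserved, and declaring $\tilde{\mathcal{Y}}^i_{mq}$ to be exactly the indices carrying strictly positive mass gives the positivity condition.

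For the quantitative bound I would count distinct truthful indices that retain weight $\ge\eta/2$. At most $2D$ messages are pruned, so the retained in-neighbour set has size $\ge|\mathcal{N}^{in}_i(\mathcal{G}_c)|-2D$; the retained truthful neighbours keep their original weight $\ge\eta$, while each retained adversary is matched, via the sandwiching, to a distinct discarded truthful extreme value. A short counting argument shows this matching can be made injective: the number of retained adversaries above the own value never exceeds the number of truthful values discarded from the top block (both are controlled by the at-most-$D$ adversaries), and symmetrically below. The main obstacle I expect is precisely this bookkeeping rather than the sandwiching itself — I must verify that the injective pairing places at least $|\mathcal{N}^{in}_i(\mathcal{G}_c)|-2D$ transferred contributions on \emph{distinct} truthful indices and that each such index clears the $\eta/2$ floor in the worst case (e.g.\ when several adversarial terms would otherwise collapse their mass onto the single self-index $i$), which is the delicate point on which the subsequent convergence analysis rests.
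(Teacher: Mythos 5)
Your proof takes the same route as the paper's own, which consists of the observation that the observed branch is unaffected plus a two-sentence sketch of the sandwiching argument deferred to Proposition 5.1 of \cite{robust_opt_ghar}. Your sandwiching step is correct and in fact more explicit than the paper's: where the paper merely asserts the existence of truthful $s,t$ with $\mathbf{y}^{is}_{mq}\le\mathbf{y}^{ij}_{mq}\le\mathbf{y}^{it}_{mq}$, you pin the lower partner to the truthful self-value $\mathbf{y}^{ii}_{mq}=\mathbf{x}^{i}_{mq}$ and obtain the upper partner from the $D$-local hypothesis via the ``$D$ discarded values plus $j$ give $D+1$ distinct neighbours'' count; that reasoning, and the injectivity of the pairing, are sound. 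The one step you leave open --- that at least $|\mathcal{N}^{in}_{i}(\mathcal{G}_{c})|-2D$ indices end up with weight at least $\eta/2$ --- is indeed the delicate part, and it is also precisely the part the paper's proof does not supply (it is silently inherited from the reference). Be aware that it does not follow from the bookkeeping you describe: if a retained adversarial value sits at or very near $\mathbf{x}^{i}_{mq}$, essentially all of its weight folds onto the single self-index, its matched discarded truthful partner receives less than $\eta/2$, and the retained truthful in-neighbours can number as few as $|\mathcal{N}^{in}_{i}(\mathcal{G}_{c})|-2D-a$ with $a\le D$ retained adversaries; the worst case of your construction then yields only $|\mathcal{N}^{in}_{i}(\mathcal{G}_{c})|-3D+1$ indices clearing the $\eta/2$ floor, which falls short of the claimed $|\mathcal{N}^{in}_{i}(\mathcal{G}_{c})|-2D$ once $D\ge 2$. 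So you correctly identified the obstacle, but closing it requires either a different pairing/weight-splitting scheme or a revised count, and on this point the paper's own proof gives you nothing to check against.
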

\begin{proof}
Note that the difference between \eqref{eqn:comm_step} and \eqref{eqn:equiv_comm_step} is that the set $\mathcal{Y}^{i}_{mq}$ and the weights $w^{ij}_{mq}$ are replaced with $\tilde{\mathcal{Y}}^{i}_{mq}$ and $\tilde{w}^{ij}_{mq}$ respectively. The set $\mathcal{Y}^{i}_{mq}$ is the set of agents whose message is retained, which can contain adversarial agents, while the set $\tilde{\mathcal{Y}}^{i}_{mq}$ is the set of \emph{only truthful} agents retained.

Since $\mathbf{v}^{i}_{mq}$ is the same for both \eqref{eqn:comm_step} and \eqref{eqn:equiv_comm_step} when $m \in \mathcal{N}^{in}_{i}(\mathcal{G}_{o})$, we only need to consider $m\not\in \mathcal{N}^{in}_{i}(\mathcal{G}_{o})$, i.e., agents unobserved by $i$. The rest of the proof follows the same logic as Proposition 5.1 \cite{robust_opt_ghar}.  The idea of the proof is that either all adversaries are filtered out,  ($\mathcal{Y}^{i}_{mq} = \tilde{\mathcal{Y}}^{i}_{mq}$), or messages from adversaries that are not filtered out can be written as a linear combination of agents in $\mathcal{T}$. This is because if $\mathbf{y}^{ij}_{mq}$ for $j \in\mathcal{A}$ is not filtered out, then there exists $s,t\in\mathcal{T}$ such that $\mathbf{y}^{is}_{mq} \leq \mathbf{y}^{ij}_{mq} \leq \mathbf{y}^{it}_{mq}$ and $\mathbf{y}^{ij}_{mq} = (1-\alpha)\mathbf{y}^{is}_{mq} + \alpha \mathbf{y}^{it}_{mq}$ for some $\alpha \in [0,1]$. The weight $w^{ij}_{mq}$ of the adversarial node would be added to the weights of $w^{is}_{mq}$ and $w^{it}_{mq}$ by $(1-\alpha)w^{ij}_{mq}$ and $\alpha w^{ij}_{mq}$, respectively, resulting in  $\tilde{w}_{mq}^{is}$ and $\tilde{w}_{mq}^{it}$.
\end{proof}

Notice that under Assumption \ref{asmp:adversary_dynamics}, adversarial agent $i\in\mathcal{A}$ can update $\mathbf{v}^{i}_{mq}$ as a function of just truthful messages. Similarly, by Lemma \ref{lemma:only_truth_equiv} truthful agents $i\in\mathcal{T}$ can also update $\mathbf{v}^{i}_{mq}$ as a function of just truthful messages. Additionally, since truthful messages must satisfy \eqref{eqn:truth_comm} we can express $\mathbf{v}^{i}_{mq}$ as a function of $\mathbf{x}^{j}_{mq}$. For agent $i\in\mathcal{T}$ let $\tilde{w}^{i}_{mq} = \row(\tilde{w}^{ij}_{mq})_{j\in\mathcal{N}}$ where $\tilde{w}^{ij}_{mq}$ are the weights from Lemma \ref{lemma:only_truth_equiv}, and let $\mathbf{x}_{mq} = \col(\mathbf{x}^{j}_{mq})_{j\in\mathcal{N}}$. Then, we can compactly write $\mathbf{v}^{i}_{mq} = \tilde{w}^{i}_{mq}\mathbf{x}_{mq}$ when $m\not\in \mathcal{N}_{i}^{in}(\mathcal{G}_{o})$. With abuse of notation for adversarial agent $i\in\mathcal{A}$, let $\tilde{w}^{ij}_{mq}$ denote the weights that adversarial agents places on the messages received. Then, adversarial agents can also write $\mathbf{v}^{i}_{mq} = \tilde{w}^{i}_{mq}\mathbf{x}_{mq}$ when $m\not\in \mathcal{N}_{i}^{in}(\mathcal{G}_{o})$. Let $E_{mq} \in\reals^{n\times n}$ be a diagonal matrix with the $(n^{<}_{m} + q)$-diagonal entry equal to $1$ and $0$'s elsewhere, $\tilde{W}^{i} = \sum_{m\in\mathcal{N}}\sum_{q\in\set{1,2,\dots,n_{m}}} \tilde{w}^{i}_{mq}\otimes E_{mq}$, and $O^{i}\in\reals^{n\times n}$ be a diagonal matrix where the $(n^{<}_{m} + q)$-diagonal entry is $1$ if agent $i$ can observe $m$ and $0$ otherwise. Then, for agent $i$ we can compactly write
\begin{align}
\mathbf{v}^{i} = \bracket{I - O^{i}}\tilde{W}^{i}\mathbf{x} + O^{i}\mathbf{x} \label{eqn:compact_i}
\end{align}
The following lemma gives a compact form for Algorithm 1.
\begin{lemma} \label{lemma:compact_dynamics}
	Under Assumption \ref{asmp:d_local} and \ref{asmp:adversary_dynamics}, Algorithm 1 is equivalent to,
\begin{align}
	\mathbf{v}[k] &= \overline{W}[k]\mathbf{x}[k] \label{eqn:compact_W_truthful} \\
	\mathbf{x}[k+1] &= \mathbf{v}[k] - \alpha \mathcal{R}^{T}\mathbf{F}(\mathbf{v}[k]) \label{eqn:compact_gradient}
\end{align}
where $\mathcal{R} = diag(\mathcal{R}_{i})_{i\in\mathcal{N}}$, $\mathbf{F}(\mathbf{x}) = \col\bracket{\frac{\partial J_{i}(\mathbf{x}^{i})}{\partial x_{i}}}_{i\in\mathcal{N}}$ is the \emph{extended pseudo-gradient}, $\mathbf{v} = \col(\mathbf{v}^{i})_{i\in\mathcal{N}}$, and
\begin{align}
	\overline{W}[k] &= \bracket{I - O}\tilde{W}[k] + O \label{eqn:Wk}
\end{align}
where $O = \col(O^{i})_{i\in\mathcal{N}}$, and $\tilde{W}[k] = \col(\tilde{W}^{i}[k])_{i\in\mathcal{N}}$. \hfill \QEDopen
\end{lemma}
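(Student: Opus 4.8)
The plan is to treat this lemma as an \emph{assembly} result: essentially all of the analytical content has already been established, so what remains is to stack the per-agent dynamics into a single block system and read off the operators $\overline{W}[k]$, $\mathcal{R}$, and $\mathbf{F}$. The two equations \eqref{eqn:compact_W_truthful}--\eqref{eqn:compact_gradient} correspond exactly to the two phases of Algorithm 1: forming the intermediate estimate $\mathbf{v}^{i}$ from the beliefs $\mathbf{x}$ (Steps 1--2), and taking the gradient step (Step 3). The crucial enabling fact is that \emph{both} classes of agents can be written in one common form: truthful agents by Lemma \ref{lemma:only_truth_equiv} (which relies on Assumption \ref{asmp:d_local}), and adversarial agents by Assumption \ref{asmp:adversary_dynamics}. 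This uniformity over $i\in\mathcal{N}$ is precisely what makes a single stacking possible.

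First I would establish \eqref{eqn:compact_W_truthful}. By Lemma \ref{lemma:only_truth_equiv} every truthful agent's filtering step is a weighted average over \emph{truthful} neighbours only, and since truthful messages obey \eqref{eqn:truth_comm}, i.e. $\mathbf{y}^{ij}_{mq}=\mathbf{x}^{j}_{mq}$, that average becomes a linear map of the stacked belief $\mathbf{x}$; Assumption \ref{asmp:adversary_dynamics} supplies the identical form for adversarial agents. This is exactly the per-agent identity \eqref{eqn:compact_i}, $\mathbf{v}^{i}=(I-O^{i})\tilde{W}^{i}\mathbf{x}+O^{i}\mathbf{x}$, now valid for every $i\in\mathcal{N}$. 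Stacking $\mathbf{v}=\col(\mathbf{v}^{i})_{i\in\mathcal{N}}$ and collecting the blocks into $O=\col(O^{i})_{i\in\mathcal{N}}$ and $\tilde{W}[k]=\col(\tilde{W}^{i}[k])_{i\in\mathcal{N}}$, the $i$-th block row of the resulting operator is $(I-O^{i})\tilde{W}^{i}+O^{i}$, which I would verify assembles into $\overline{W}[k]=(I-O)\tilde{W}[k]+O$ as in \eqref{eqn:Wk}, yielding $\mathbf{v}[k]=\overline{W}[k]\mathbf{x}[k]$.

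Next I would obtain \eqref{eqn:compact_gradient} by stacking Step 3, namely $\mathbf{x}^{i}[k+1]=\mathbf{v}^{i}[k]-\alpha\mathcal{R}_{i}^{T}\,\partial J_{i}(\mathbf{v}^{i}[k])/\partial x_{i}$, which holds for adversaries as well by Assumption \ref{asmp:adversary_dynamics}. The $\mathbf{v}^{i}[k]$ terms stack directly to $\mathbf{v}[k]$. For the gradient terms, the definition of the extended pseudo-gradient $\mathbf{F}(\mathbf{v})=\col(\partial J_{i}(\mathbf{v}^{i})/\partial x_{i})_{i\in\mathcal{N}}$ together with the block-diagonal $\mathcal{R}=\diag(\mathcal{R}_{i})_{i\in\mathcal{N}}$ gives $\mathcal{R}^{T}\mathbf{F}(\mathbf{v})=\col(\mathcal{R}_{i}^{T}\,\partial J_{i}(\mathbf{v}^{i})/\partial x_{i})_{i\in\mathcal{N}}$, so the stack is exactly \eqref{eqn:compact_gradient}. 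This step is routine block-diagonal bookkeeping.

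The main obstacle lies entirely in the first equation, and is one of careful bookkeeping rather than analysis. Two points need care. The first is that \eqref{eqn:compact_i} genuinely holds \emph{uniformly}: for truthful agents it rests on Lemma \ref{lemma:only_truth_equiv} and the honesty relation \eqref{eqn:truth_comm}, while for adversarial agents it is imposed by Assumption \ref{asmp:adversary_dynamics}, and I would state explicitly that under these two facts every agent's $\mathbf{v}^{i}$ is a linear function of the \emph{common} stacked vector $\mathbf{x}$. The second, more delicate, point is the observation term: for an observed component $\mathbf{v}^{i}_{mq}=x_{mq}$ is the \emph{true} action, which I would express as a linear function of $\mathbf{x}$ through the identity $x_{mq}=\mathbf{x}^{m}_{mq}$ (agent $m$ knows its own action), and then check that $O$ injects these true values into exactly the observed rows while $(I-O)$ masks them out of the communicated part, so that the assembly $\overline{W}=(I-O)\tilde{W}+O$ is dimensionally consistent and reproduces \eqref{eqn:compact_i} row-by-row.
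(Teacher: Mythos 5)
Your proposal is correct and follows essentially the same route as the paper, whose proof is the one-line observation that the result is ``trivially obtained by stacking equation \eqref{eqn:compact_i} and Step 3 of Algorithm 1,'' with Assumption \ref{asmp:adversary_dynamics} extending Step 3 to adversarial agents. Your additional care with the observed components (writing $x_{mq}=\mathbf{x}^{m}_{mq}$ so that the observation term is genuinely a linear map of the stacked vector $\mathbf{x}$) fills in a detail the paper leaves implicit, but it is the same argument.
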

\begin{proof}
Trivially obtained by stacking equation \eqref{eqn:compact_i} and Step $3$ of algorithm 1. Note that Step 3 also applies to adversarial agents by Assumption \ref{asmp:adversary_dynamics}.
\end{proof}

While Lemma \ref{lemma:only_truth_equiv} does show that step 2 of the algorithm can be described using messages sent only by truthful agents, this is not enough to ensure that agents learn $x$. This is because we have not assumed anything on the structure of $\overline{W}[k]$. Figure \ref{fig:equiv_weight} shows a counter example, where $\mathcal{A}=\set{4}$ is $1$-local, $|\mathcal{N}^{in}_{i}(\mathcal{G}_{c})| \geq 3$, and satisfies Assumption \ref{asmp:d_local}.  However, it is obvious that agents will not learn $x$.

\vspace{2mm}
\begin{minipage}[ht]{1\columnwidth}
\center
\begin{tikzpicture}
\def\Nnodes{3}
\def\radius{15mm}
\def\T{green!65!black}
\def\A{red!90!black}

\node[draw, circle, \T] (n1) at (0,0) {1};
\node[draw, circle, \T] (n2) at (1,1) {2};
\node[draw, circle, \T] (n3) at (1,-1) {3};
\node[draw, circle, \A] (n4) at (2,0) {4};
\node[draw, circle, \T] (n5) at (3,1) {5};
\node[draw, circle, \T] (n6) at (3,-1) {6};
\node[draw, circle, \T] (n7) at (4,0) {7};

\path[<->,line width=0.20mm]
	(n1) edge (n2)
	(n1) edge (n3)
	(n1) edge (n4)
	(n2) edge (n4)
	(n2) edge (n3)
	(n3) edge (n4)
	(n4) edge (n5)
	(n4) edge (n6)
	(n4) edge (n7)
	(n5) edge (n7)
	(n5) edge (n6)
	(n6) edge (n7)
;

\end{tikzpicture}

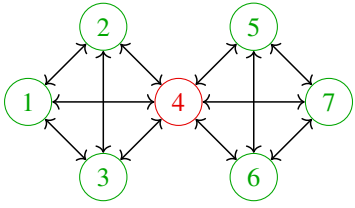
\captionof{figure}{Graph of $\mathcal{G}_{c} = \mathcal{G}_{o}$}
\label{fig:equiv_weight}
\end{minipage}
\vspace{0mm}

From Figure \ref{fig:equiv_weight} we see that information about $x_{7}$ will never reach nodes $1,2,3$. Node $4$ can always manipulate the information about $x_{7}$ before it reaches node $1,2,3$. We can see that the graph is not connected enough for information to flow throughout the network. To deal with this case we make the following assumption.

\begin{asmp} \label{asmp:info_robust}
Given the communication graph $\mathcal{G}_{c}$ and observation graph $\mathcal{G}_{o}$, all nodes $m\in\mathcal{N}$ are $(2D+1)$-information robust. \hfill \QEDopen
\end{asmp}

\begin{remark}
This assumption ensures that after the pruning process of Step 2 in Algorithm 1, the information about any agent $m$ can reach all agents in $\mathcal{N}$.
\end{remark}

\begin{lemma} \label{lemma:equiv_rooted}
	Consider the communication graph $\mathcal{G}_{c}$ and observation graph $\mathcal{G}_{o}$, under Assumption \ref{asmp:d_local}, \ref{asmp:adversary_dynamics}, and \ref{asmp:info_robust}. Then, for each $m \in\mathcal{N}$ and $q\in\set{1,2,\dots,n_{m}}$, the graph representing the information exchange of the $q^{th}$ component of agent $m$'s action induced by \eqref{eqn:Wk} has a path from $m$ to every other node, i.e., $m$ is rooted. \hfill \QEDopen
\end{lemma}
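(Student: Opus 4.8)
The plan is to reduce the claim to Lemma~\ref{lemma:rooted_edge_removed} by identifying the information-exchange graph of the $(m,q)$ component with a pruned copy of $\mathcal{G}_{c}$ together with the observation out-edges of $m$, and then invoking $(2D+1)$-information robustness. Fix $m\in\mathcal{N}$ and $q\in\set{1,\dots,n_{m}}$. First I would read off the edges induced by \eqref{eqn:Wk}: by \eqref{eqn:compact_i} restricted to the $(m,q)$ entry, $\mathbf{v}^{i}_{mq}$ equals $x_{mq}=\mathbf{x}^{m}_{mq}$ whenever $i$ observes $m$ (giving an edge $m\to i$), and equals $\sum_{j\in\tilde{\mathcal{Y}}^{i}_{mq}}\tilde{w}^{ij}_{mq}\mathbf{x}^{j}_{mq}$ otherwise (giving edges $j\to i$ for each $j$ with $\tilde{w}^{ij}_{mq}>0$). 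Thus the induced graph is exactly the observation out-edges of $m$ plus the surviving communication edges, and by Lemma~\ref{lemma:only_truth_equiv} every surviving communication edge emanates from a truthful agent, since $\tilde{\mathcal{Y}}^{i}_{mq}\subset\bracket{\mathcal{N}^{in}_{i}(\mathcal{G}_{c})\cap\mathcal{T}}\cup\set{i}$.

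Next I would quantify the pruning. Lemma~\ref{lemma:only_truth_equiv} guarantees that each node $i$ retains at least $|\mathcal{N}^{in}_{i}(\mathcal{G}_{c})|-2D$ truthful in-neighbours with positive weight, so the communication part of the induced graph is obtained from $\mathcal{G}_{c}$ by deleting at most $2D=(2D+1)-1$ in-edges at every node (and discarding the adversarial ones). Since by Assumption~\ref{asmp:info_robust} the node $m$ is $(2D+1)$-information robust, this matches the hypothesis of Lemma~\ref{lemma:rooted_edge_removed} with $i=m$ and $r=2D+1$, and I would re-run its reachability argument: let $Z$ be the set of nodes reachable from $m$ in the induced graph, so $Z\supseteq\set{m}\cup\mathcal{N}^{out}_{m}(\mathcal{G}_{o})$; assuming $\bar{Z}\neq\emptyset$, information robustness applied to $S=Z$ produces a node $j\in\bar{Z}$ with at least $2D+1$ communication in-edges from $Z$, and since at most $2D$ in-edges are removed at $j$, at least one edge from $Z$ to $j$ should survive, forcing $j\in Z$ and a contradiction. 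Hence $Z=\mathcal{N}$ and $m$ is rooted.

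The delicate point, and the step I expect to take the most care, is that the surviving edge produced above must be a \emph{truthful} edge, because only truthful edges remain in the induced graph after the rewriting of Lemma~\ref{lemma:only_truth_equiv}: an adversarial in-edge from $Z$ that escapes pruning is re-expressed as a convex combination of bracketing truthful agents that need not lie in $Z$, so it cannot be used to extend the path. To close this gap I would argue directly that a retained truthful in-neighbour of $j$ lies in $Z$. For truthful $j$, Assumption~\ref{asmp:d_local} ($\mathcal{A}$ is $D$-local) leaves at least $(2D+1)-D=D+1$ truthful in-neighbours inside $Z$; combining $|\mathcal{N}^{in}_{j}(\mathcal{G}_{c})\cap\bar{Z}|\leq|\mathcal{N}^{in}_{j}(\mathcal{G}_{c})|-(2D+1)$ with the fact that at least $|\mathcal{N}^{in}_{j}(\mathcal{G}_{c})|-2D$ in-neighbours are retained truthful agents, a pigeonhole count forces at least one retained truthful in-neighbour into $Z$, supplying the required edge. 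The adversarial nodes $j\in\bar{Z}\cap\mathcal{A}$ would be reached using Assumption~\ref{asmp:adversary_dynamics}, which makes $\mathbf{v}^{j}$ a weighted average of truthful messages, so that the same reachability reasoning through $j$'s truthful in-neighbours applies.

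Finally I would note the one structural discrepancy with Lemma~\ref{lemma:rooted_edge_removed}: the induced graph carries only the observation edges out of $m$, not all of $\mathcal{E}_{o}$. This is harmless, because the inductive step of that lemma enlarges $Z$ using the communication edges furnished by information robustness and never uses observation edges other than those seeding $\mathcal{N}^{out}_{m}(\mathcal{G}_{o})\subseteq Z$; restricting to the $m$-observation edges only shrinks $Z$, and information robustness still applies verbatim to the smaller set. Repeating the argument for every $m\in\mathcal{N}$ and $q\in\set{1,\dots,n_{m}}$ would complete the proof.
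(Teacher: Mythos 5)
Your proof is correct and follows essentially the same route as the paper's: bound the number of pruned in-edges at each node by $2D$ via Lemma~\ref{lemma:only_truth_equiv}, invoke $(2D+1)$-information robustness through the reachability argument of Lemma~\ref{lemma:rooted_edge_removed} to root $m$ at all truthful nodes, and extend to adversarial nodes via Assumption~\ref{asmp:adversary_dynamics}. The ``delicate point'' you flag is real but is already absorbed by the statement of Lemma~\ref{lemma:only_truth_equiv} --- since $\tilde{\mathcal{Y}}^{j}_{mq}$ contains \emph{only} truthful agents and has at least $|\mathcal{N}^{in}_{j}(\mathcal{G}_{c})|-2D$ elements, any in-neighbour of $j$ counted among the $2D+1$ from $Z$ that is not discarded is automatically truthful --- so your pigeonhole count is a correct (if slightly redundant) way of making explicit what the paper leaves implicit.
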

\begin{proof}
For each $m \in\mathcal{N}$ and $q\in\set{1,2,\dots,n_{m}}$, we know from Lemma \ref{lemma:only_truth_equiv} that each agent in $\mathcal{T}$ has $|\tilde{\mathcal{Y}}^{i}_{mq}|\geq |\mathcal{N}^{in}_{i}(\mathcal{G}_{c})|-2D$. Therefore, each agent has removed at most $2D$ nodes. Since, the graph is $(2D+1)$-information robust, from Lemma \ref{lemma:rooted_edge_removed} we know that there is at least one path from $m$ to every node in $\mathcal{T}$. Since there is a path to each truthful agent and adversarial agents get their messages from only truthful agents by assumption \ref{asmp:adversary_dynamics} we know that their is a path to all adversarial agents as well.
\end{proof}

\begin{remark}
	In the literature for resilient consensus algorithms \cite[Thm. 2]{robust_consensus_0} and resilent distributed optimization assumption \cite[Thm 6.1]{robust_opt_ghar}, Assumption \ref{asmp:d_local} is a standard assumption. If Assumption \ref{asmp:d_local} does not hold then adversarial agents can not be filtered out and consensus can not occur. Additionally, the assumption that the graph is $(2D+1)$-robust \cite{robust_consensus_0}, which is very closely related to $(2D+1)$-information robust (Assumption \ref{asmp:info_robust}), is a standard assumption that ensure that the graph is rooted after removing edges \cite[Thm. 2]{robust_consensus_0} \cite[Thm 6.1]{robust_opt_ghar}.
\end{remark}

Before proving that our proposed algorithm converges to the NE, we first highlight some important properties $\overline{W}[k]$ has under Assumption \ref{asmp:adversary_dynamics} and \ref{asmp:info_robust}. The matrix $\overline{W}[k]$ is a time-varying row stochastic matrix with $\overline{W}[k]\bracket{\mathbf{1}_{N}\otimes v} = \mathbf{1}_{N}\otimes v$, $\forall k\geq 0$ and $\forall v\in \reals^{n}$. Additionally, $\mathcal{R}\overline{W}[k] = \mathcal{R}$, $\forall k\geq 0$. Let $A \defeq \frac{1}{\sqrt{N}} \mathbf{1}_{N}\otimes\mathcal{R}\in\reals^{Nn\times Nn}$ and notice that $A\overline{W}[k] = \overline{W}[k]A = A$, $A^2 = A$, and $\norm{A} = 1$, i.e., $A$ is a projection matrix and it commutes with $\overline{W}[k]$. The following lemma gives an important property of the matrix  $\overline{W}[k]$.
\begin{lemma} \label{lemma:lyapunov_ltv}
	Consider the communication graph $\mathcal{G}_{c}$ and the observation graph $\mathcal{G}_{o}$, under Assumptions \ref{asmp:d_local}, \ref{asmp:adversary_dynamics}, and \ref{asmp:info_robust}. If $\overline{W}[k]$ is as in Lemma \ref{lemma:compact_dynamics} then there exists a sequence of positive semi-definite matrices $P[k]$ such that,
\begin{align*}
	\overline{W}[k]^{T}P[k+1]\overline{W}[k] = P[k]- (I-A)^{T}(I-A)
\end{align*}
where $I$ is the identity matrix of appropriate dimensions, and $Null(P[k]) = \mathbf{1}\otimes v$, for all $k\geq 0$, $v \in \reals^n$. 
Additionally, 
\begin{align*}
	P[k] \preceq \overline{p}I 
\end{align*}
where $ \overline{p} = \frac{4N}{1 - C^{\frac{2}{N-1}}}$, with $C =1-\bracket{\frac{\eta}{2}}^{N-1}$. \hfill \QEDopen
\end{lemma}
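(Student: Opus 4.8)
The plan is to construct $P[k]$ explicitly as the convergent infinite sum generated by running the Lyapunov recursion forward in time, and then to read off both the nullspace characterization and the uniform bound $P[k]\preceq\overline{p}I$ from a geometric contraction estimate on products of the $\overline{W}[k]$.

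First I would record a reduction. Since $A^{2}=A$ and $\norm{A}=1$, the idempotent $A$ is an orthogonal projection, hence $A=A^{T}$ and $(I-A)^{T}(I-A)=(I-A)^{2}=I-A$. Because $A$ commutes with every $\overline{W}[k]$ and $A\overline{W}[k]=\overline{W}[k]A=A$, the factor $(I-A)$ also commutes with each $\overline{W}[k]$; moreover $\col(Ax)=\tfrac{1}{\sqrt{N}}\mathbf{1}_{N}\otimes(\mathcal{R}x)$ and surjectivity of $\mathcal{R}$ give $\mathrm{Range}(A)=\set{\mathbf{1}_{N}\otimes v : v\in\reals^{n}}$. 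Writing $\Phi(k,k)=I$ and $\Phi(k,k+j)=\overline{W}[k+j-1]\cdots\overline{W}[k]$ for $j\geq 1$, I propose
\begin{align*}
	P[k] = \sum_{j=0}^{\infty}\Phi(k,k+j)^{T}(I-A)\Phi(k,k+j).
\end{align*}
Each summand equals $M^{T}M$ with $M=(I-A)\Phi(k,k+j)$, so every partial sum is positive semidefinite, and $P[k]\succeq 0$ once convergence is established.

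Next I would verify the recursion and the kernel. Using $\Phi(k+1,k+1+j)\,\overline{W}[k]=\Phi(k,k+1+j)$ and $\Phi(k,k)=I$, conjugating $P[k+1]$ by $\overline{W}[k]$ reindexes the series and telescopes to $\overline{W}[k]^{T}P[k+1]\overline{W}[k]=P[k]-(I-A)=P[k]-(I-A)^{T}(I-A)$, which is exactly the claimed identity. For the nullspace, $z^{T}P[k]z=\sum_{j\geq0}\snorm{(I-A)\Phi(k,k+j)z}$, so the $j=0$ term forces $(I-A)z=\mathbf{0}$, i.e. $z\in\mathrm{Range}(A)=\set{\mathbf{1}\otimes v}$, giving $Null(P[k])\subseteq\set{\mathbf{1}\otimes v}$; conversely $(I-A)(\mathbf{1}\otimes v)=\mathbf{0}$ together with $\overline{W}[k](\mathbf{1}\otimes v)=\mathbf{1}\otimes v$ gives the reverse inclusion.

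The heart of the argument, and the main obstacle, is the uniform geometric bound on $\norm{(I-A)\Phi(k,k+j)}$, which simultaneously certifies convergence of the series and produces $P[k]\preceq\overline{p}I$. Here I would exploit the block structure: for each fixed scalar component $(m,q)$, $\overline{W}[k]$ reduces to an $N\times N$ row-stochastic matrix $W_{mq}[k]$ whose retained weights (including self-weights) are bounded below by $\eta/2$ by Lemma \ref{lemma:only_truth_equiv}, whose row $m$ equals $e_{m}^{T}$ (the root carries the true value), and whose induced graph is rooted at $m$ at every $k$ by Lemma \ref{lemma:equiv_rooted}. Since the single-step graph is rooted at the fixed node $m$ and retained/self edges persist, the set of nodes reachable from $m$ along time-respecting paths strictly grows until it is all of $\mathcal{N}$, so over any window of $N-1$ steps the product $W_{mq}[k+N-2]\cdots W_{mq}[k]$ has its $m$-th column bounded below by $\bracket{\eta/2}^{N-1}$ entrywise; such a common positive column makes the windowed product scrambling with ergodicity coefficient at most $C=1-\bracket{\eta/2}^{N-1}$, so the spread seminorm $\max_{i}(\cdot)_{i}-\min_{i}(\cdot)_{i}$ contracts by $C$ every $N-1$ steps, uniformly in $k$. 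Converting this into a Euclidean estimate through the equivalences $\snorm{(I-A)y}\leq N\bracket{\max_{i}(y)_{i}-\min_{i}(y)_{i}}^{2}$ and $\max_{i}(y)_{i}-\min_{i}(y)_{i}\leq 2\norm{(I-A)y}$ yields geometric decay of $\norm{(I-A)\Phi(k,k+j)}$ with rate governed by $C^{1/(N-1)}$, and summing the resulting geometric series over windows of length $N-1$ produces a uniform bound $P[k]\preceq\overline{p}I$. The delicate points are that rootedness must be invoked at each individual step so that the growing-reachable-set argument applies to the time-varying product, and that the exact value $\overline{p}=\tfrac{4N}{1-C^{2/(N-1)}}$ emerges only after careful bookkeeping of the norm-equivalence factors ($4N$) against the per-window contraction.
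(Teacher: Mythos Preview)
Your overall architecture---define $P[k]$ explicitly as the forward Lyapunov sum $\sum_{j\geq 0}\Phi^{T}(\cdot)\Phi$, verify the recursion by reindexing, read off the nullspace from the $j=0$ term, and obtain $\overline{p}$ from a geometric contraction over windows of $N-1$ steps---is exactly what the paper does. The construction, the recursion check, and the nullspace argument are essentially identical.

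There is, however, a genuine gap in your reduction step and in the final norm conversion. The matrix $A=\tfrac{1}{\sqrt{N}}\mathbf{1}_{N}\otimes\mathcal{R}$ is \emph{not} symmetric: on the $(m,q)$ block it acts as $\tfrac{1}{\sqrt{N}}\mathbf{1}_{N}e_{m}^{T}$ (extract agent $m$'s own entry and broadcast it), not as the averaging projector $\tfrac{1}{N}\mathbf{1}_{N}\mathbf{1}_{N}^{T}$. Consequently your inference ``$A^{2}=A$, $\norm{A}=1$ $\Rightarrow$ $A=A^{T}$'' fails here, $(I-A)^{T}(I-A)\neq I-A$, and your $P[k]=\sum_{j}\Phi^{T}(I-A)\Phi$ is not even symmetric. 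The paper keeps $(I-A)^{T}(I-A)$ intact throughout, which fixes this immediately. More importantly, your spread-seminorm conversion $\snorm{(I-A)y}\leq N\bigl(\max_{i}y_{i}-\min_{i}y_{i}\bigr)^{2}$ is tied to the averaging projector and is false for this $A$: if all $y_{i}$ are equal and nonzero, the spread vanishes but $(I-A)y\neq 0$.

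The paper closes this gap differently. It uses $\Phi A=A$ to write $\Phi(I-A)=\Phi-A$, notes that on the $(m,q)$ block the $i$-th row of $\Phi-A$ is $[\breve{\phi}^{i1}_{mq},\dots,\breve{\phi}^{im}_{mq}-1,\dots,\breve{\phi}^{iN}_{mq}]$, and bounds its $\infty$-norm directly: row-stochasticity gives $\sum_{j\neq m}\breve{\phi}^{ij}_{mq}=1-\breve{\phi}^{im}_{mq}$, so the row-sum is $2(1-\breve{\phi}^{im}_{mq})\leq 2C^{r}$ after $r(N-1)$ steps, using the same column lower bound $\breve{\phi}^{im}_{mq}\geq 1-C^{r}$ you derived. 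The conversion to $2$-norm then comes from $\norm{M}_{2}\leq\sqrt{N}\norm{M}_{\infty}$, which produces the factor $4N$ and, after summing the geometric series in $C^{2/(N-1)}$, the stated $\overline{p}$. Your scrambling/ergodicity observation is correct and gives the same column bound, but the final estimate must go through $\norm{\Phi(I-A)}_{\infty}$ rather than the spread seminorm.
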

\begin{proof}
	Found in the Appendix
\end{proof}

\begin{remark} \label{remark:conservative}
	The upper bound $\overline{p}$ is very conservative because it is obtained assuming the worst possible communication graph switching. In practice $\bar{p}$ is significantly lower.
\end{remark}

\begin{thm} \label{thm:robust_gradient}
	Given a game $G$, with communication graph $\mathcal{G}_{c}$ and observation graph $\mathcal{G}_{o}$ such that Assumptions 1 - 4 hold, let
\begin{align}\label{eq_M}
	M \defeq \begin{bmatrix}
		2\alpha \mu - \overline{p}\alpha^{2}L^{2} & -\alpha \overline{p}L\bracket{1+\alpha L} \\
		-\alpha \overline{p}L\bracket{1+\alpha L} & 1 - 2\alpha (\overline{p}-1)L - \overline{p}\alpha^{2}L^{2}
	\end{bmatrix}
\end{align}
with $\overline{p}$ as in Lemma \ref{lemma:lyapunov_ltv}, and select $\alpha$ such that $M\succ \mathbf{0}$.
Then for any initial condition $\mathbf{x}_{0}, \mathbf{v}_{0}$, the iterates generated by Algorithm 1 $\col(\mathbf{x}^{i}[k])_{i\in\mathcal{N}} \to \mathbf{1}\otimes x^{*}$, where $x^{*}$ is the Nash equilibrium. \hfill \QEDopen
\end{thm}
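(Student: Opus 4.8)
The plan is to build a two-term Lyapunov function, one term measuring the optimality gap of the collective action and one measuring the $P[k]$-weighted disagreement among the estimates, and then to show that the pair of errors contracts according to the matrix $M$. By Lemma~\ref{lemma:only_truth_equiv} and Lemma~\ref{lemma:compact_dynamics} the iterations are captured by \eqref{eqn:compact_W_truthful}--\eqref{eqn:compact_gradient}, while Lemma~\ref{lemma:equiv_rooted} together with Lemma~\ref{lemma:lyapunov_ltv} supply the sequence $P[k]$ with $\mathrm{Null}(P[k])$ equal to the consensus subspace and $P[k]\preceq\overline{p}I$. These lemmas already absorb the adversarial behaviour into $\overline{W}[k]$, so at the level of the theorem the statement reduces to a deterministic descent argument on \eqref{eqn:compact_gradient}.

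First I would introduce error coordinates. Set $\bar{x}[k]\defeq\mathcal{R}\mathbf{x}[k]$, the true collective action, and $\mathbf{e}[k]\defeq\mathbf{x}[k]-\mathbf{x}^{*}$. Applying $\mathcal{R}\overline{W}[k]=\mathcal{R}$ and $\mathcal{R}\mathcal{R}^{T}=I$ to \eqref{eqn:compact_gradient} yields the reduced recursion $\bar{x}[k+1]=\bar{x}[k]-\alpha\mathbf{F}(\mathbf{v}[k])$, while $\overline{W}[k]\mathbf{x}^{*}=\mathbf{x}^{*}$ and $\mathbf{F}(\mathbf{x}^{*})=\mathbf{0}$ (Remark~\ref{remark:ne_partial_char}) confirm that $\mathbf{x}^{*}$ is a fixed point and give $\mathbf{e}[k+1]=\overline{W}[k]\mathbf{e}[k]-\alpha\mathcal{R}^{T}\mathbf{F}(\mathbf{v}[k])$. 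Define the two nonnegative quantities $O[k]\defeq\norm{\bar{x}[k]-x^{*}}^{2}$ and $C[k]\defeq\norm{\mathbf{e}[k]}_{P[k]}^{2}$; here $C[k]$ is a legitimate disagreement measure precisely because $\mathrm{Null}(P[k])$ is the consensus subspace, so the consensus component is left to be controlled separately by $O[k]$.

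Next I would derive the two scalar recursions. For $O[k]$, expand $\norm{\bar{x}[k+1]-x^{*}}^{2}$ and split $\mathbf{F}(\mathbf{v}[k])=F(\bar{x}[k])+\bracket{\mathbf{F}(\mathbf{v}[k])-F(\bar{x}[k])}$: the first inner product is bounded by $-2\alpha\mu\,O[k]$ via $\mu$-strong monotonicity with $F(x^{*})=\mathbf{0}$ (Remark~\ref{remark:ne_char}), the coupling inner product by $L$-Lipschitzness of $\mathbf{F}$ applied to the disagreement $\norm{\mathbf{v}[k]-\mathbf{1}\otimes\bar{x}[k]}$, and the quadratic term likewise, producing the $\overline{p}\alpha^{2}L^{2}$ correction. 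For $C[k]$, expand $\norm{\mathbf{e}[k+1]}_{P[k+1]}^{2}$ and substitute the identity $\overline{W}[k]^{T}P[k+1]\overline{W}[k]=P[k]-(I-A)^{T}(I-A)$ from Lemma~\ref{lemma:lyapunov_ltv}, which generates the contraction term $-\norm{(I-A)\mathbf{e}[k]}^{2}$; the remaining cross and quadratic gradient terms I would bound with $P[k+1]\preceq\overline{p}I$, $\norm{\mathcal{R}}=1$, $A\mathcal{R}^{T}=\mathcal{R}^{T}$, and Lipschitzness. The essential reduction is that both $\norm{\mathbf{v}[k]-\mathbf{1}\otimes\bar{x}[k]}$ and $\norm{(I-A)\mathbf{e}[k]}$ are comparable to $\sqrt{C[k]}$.

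Finally I would set $V[k]\defeq O[k]+C[k]$ and collect the two recursions into
\[
V[k+1]\le V[k]-\begin{bmatrix}\sqrt{O[k]}\\ \sqrt{C[k]}\end{bmatrix}^{T}M\begin{bmatrix}\sqrt{O[k]}\\ \sqrt{C[k]}\end{bmatrix},
\]
with $M$ exactly as in \eqref{eq_M}. Since $\alpha$ is chosen so that $M\succ\mathbf{0}$, the quadratic form is nonnegative, so $V[k]$ is nonincreasing and bounded below; hence the increments are summable and force $O[k]\to0$ and $C[k]\to0$. Then $\bar{x}[k]\to x^{*}$ while the disagreement vanishes, giving $\col(\mathbf{x}^{i}[k])_{i\in\mathcal{N}}\to\mathbf{1}\otimes x^{*}$. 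I expect the main obstacle to be the bookkeeping in the third step: making the cross and quadratic gradient perturbations assemble into exactly the off-diagonal entry $-\alpha\overline{p}L\bracket{1+\alpha L}$ and the diagonal corrections of $M$. This hinges on relating $\norm{\mathbf{v}[k]-\mathbf{1}\otimes\bar{x}[k]}$ to $\sqrt{C[k]}$ through the null-space structure of $P[k]$, complicated by the fact that $P[k]$ only upper-bounds the disagreement (via $\overline{p}$) and does not provide a matching lower bound, so care is needed to route the disagreement through the $-\norm{(I-A)\mathbf{e}[k]}^{2}$ contraction term rather than through $C[k]$ directly.
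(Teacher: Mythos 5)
Your overall construction is the paper's: a two-term Lyapunov function consisting of a $P[k]$-weighted consensus error plus the Euclidean optimality gap of the averaged/projected action, the telescoping identity $\overline{W}[k]^{T}P[k+1]\overline{W}[k]=P[k]-(I-A)^{T}(I-A)$ from Lemma~\ref{lemma:lyapunov_ltv} to generate the contraction, the bound $P[k]\preceq\overline{p}I$ for the perturbation terms, strong monotonicity for the optimality coordinate, and a positive-definite $M$ forcing both errors to zero. The only substantive difference is cosmetic: you work in $\mathbf{x}$-coordinates with $\mathbf{e}[k]=\mathbf{x}[k]-\mathbf{x}^{*}$, whereas the paper writes everything in terms of $\mathbf{v}_{k}$, with Lyapunov function $\snorm{\mathbf{v}_{k}-A\mathbf{v}_{k}}_{P_{k}}+\snorm{A\mathbf{v}_{k}-\mathbf{x}^{*}}$ and final quadratic form in $\varpi_{k}=\col(\norm{A\mathbf{v}_{k}-\mathbf{x}^{*}},\norm{\mathbf{v}_{k}-A\mathbf{v}_{k}})$.

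There is, however, one step in your assembly that fails as displayed: the final inequality cannot hold with the second coordinate equal to $\sqrt{C[k]}=\norm{\mathbf{e}[k]}_{P[k]}$ and $M$ exactly as in \eqref{eq_M}. The leading $1$ in $M_{22}$ comes from the contraction term $-\snorm{(I-A)\mathbf{e}[k]}$ produced by the Lyapunov identity, which is a \emph{Euclidean} disagreement; since the identity only gives $P[k]\succeq(I-A)^{T}(I-A)$, you have $\snorm{(I-A)\mathbf{e}[k]}\leq C[k]$, which is the wrong direction — dominating $C[k]$ itself would degrade the $1$ to roughly $1/\overline{p}$ and change $M$. The correct resolution, which you gesture at in your last sentence and which is exactly what the paper does, is to let the $P$-weighted term telescope ($\snorm{\cdot}_{P_{k+1}}\leq\snorm{\cdot}_{P_{k}}-\snorm{(I-A)\cdot}$) and take the second coordinate of the quadratic form to be the plain Euclidean disagreement; positive definiteness of $M$ then forces that coordinate to zero, and $C[k]\leq\overline{p}\,\snorm{(I-A)\mathbf{e}[k]}\to 0$ follows. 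A secondary bookkeeping hazard in your coordinates: your perturbation terms involve the disagreement of $\mathbf{v}[k]=\overline{W}[k]\mathbf{x}[k]$ while your contraction is in the disagreement of $\mathbf{x}[k]$, and since $\overline{W}[k]$ is only row-stochastic its spectral norm need not be $1$, so ``comparable'' hides a graph-dependent constant that would again perturb $M$; the paper sidesteps this by committing to $\mathbf{v}_{k}$ throughout and using $(I-A)\overline{W}_{k}=\overline{W}_{k}(I-A)$ together with the Lyapunov identity in a single step.
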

\begin{proof}
By Lemma \ref{lemma:compact_dynamics} the updates of Algorithm 1 are compactly written as \eqref{eqn:compact_W_truthful} and \eqref{eqn:compact_gradient}. To simplify notation, in the following we denote iteration $[k]$ with the subscript $k$ and $H \defeq \mathcal{R}^{T}\mathbf{F}$. Then, from \eqref{eqn:compact_W_truthful} and  \eqref{eqn:compact_gradient},   $\mathbf{v}_{k+1}=\overline{W}_{k+1}(\mathbf{v}_{k}-\alpha H\mathbf{v}_{k} )$. Consider the following candidate Lyapunov function,
\begin{align}
	V_{k+1} = \snorm{\mathbf{v}_{k+1}-A\mathbf{v}_{k+1}}_{P_{k+1}} + \snorm{A\mathbf{v}_{k+1} - \mathbf{x}^{*}} \label{eqn:lyap}
\end{align}
where  $P_{k+1}$ is as in Lemma \ref{lemma:lyapunov_ltv} and $\mathbf{x}^{* } = \mathbf{1}\otimes x^{*}$. It can be shown that  $V_{k+1}$ is positive definite at $\mathbf{x}^{* }$ and radially unbounded. Indeed, the first term in $V_{k+1}$ is zero only when $\mathbf{v}_{k+1}=A\mathbf{v}_{k+1}$, because the null space  of $P_{k+1}$ is the null space of $(I-A)$ for all $k$ (Lemma \ref{lemma:lyapunov_ltv}), i.e., when $\mathbf{v}_{k+1}$ is equal to its projection onto the consensus subspace. Thus, $V_{k+1}$ is zero only when also $\snorm{A\mathbf{v}_{k+1} - \mathbf{x}^{*}} = \snorm{\mathbf{v}_{k+1} - \mathbf{x}^{*}}$, which implies $\mathbf{v}_{k+1} = \mathbf{x}^{*}$. 

Note that $A\overline{W}_{k}=\overline{W}_{k}A = A$ for all $k$ and $\norm{I-A} = 1$. Additionally, from Lemma \ref{lemma:lyapunov_ltv} we know that $P_{k} \preceq \overline{p} I$. Then, from \eqref{eqn:compact_W_truthful} and \eqref{eqn:compact_gradient}, the first term in \eqref{eqn:lyap} is equal to
\begin{align*}
&\snorm{\mathbf{v}_{k+1}-A\mathbf{v}_{k+1}}_{P_{k+1}} \\
&= \snorm{(I-A)\overline{W}_{k}\mathbf{x}_{k+1}}_{P_{k+1}} = \snorm{\overline{W}_{k}(I-A)\mathbf{x}_{k+1}}_{P_{k+1}} \\
&= \snorm{(I-A)\mathbf{x}_{k+1}}_{P_{k}-I} = \snorm{(I-A)(I-\alpha H)\mathbf{v}_{k}}_{P_{k}-I} \\
&= \snorm{(I-A)\mathbf{v}_{k} - \alpha (I-A)H\mathbf{v}_{k}}_{P_{k}-I} \\
&= ||(I-A)\mathbf{v}_{k} \\
&\quad - \alpha (I-A)\bracket{H\mathbf{v}_{k} - HA\mathbf{v}_{k} + HA\mathbf{v}_{k} - H\mathbf{x}^{*}}||^{2}_{P_{k}-I}
\end{align*}
where we used that $A\overline{W}_{k} = \overline{W}_{k}A$, and Lemma \ref{lemma:lyapunov_ltv} on line 3. Additionally, we used Remark \ref{remark:ne_partial_char} ($\mathbf{F}(\mathbf{x}^{*}) = \mathbf{0}$) to add $H\mathbf{x}^{*} = \mathcal{R}^{T} \mathbf{F}(\mathbf{x}^{*}) = \mathbf{0}$. Expanding out the right hand side we get that,
\begin{align*}
&\snorm{\mathbf{v}_{k+1}-A\mathbf{v}_{k+1}}_{P_{k+1}} \\
&= \snorm{(I-A)\mathbf{v}_{k}}_{P_{k}-I} + \alpha^{2} \snorm{(I-A)\bracket{H\mathbf{v}_{k}-HA\mathbf{v}_{k}}}_{P_{k}-I} \\
	&\quad + \alpha^{2} \snorm{(I-A)\bracket{HA\mathbf{v}_{k}-H\mathbf{x}^{*}}}_{P_{k}-I} \\
	&\quad -2\alpha \inp*{(I-A)\mathbf{v}_{k}}{(I-A)\bracket{H\mathbf{v}_{k}-HA\mathbf{v}_{k}}}_{P_{k}-I} \\
	&\quad -2\alpha \inp*{(I-A)\mathbf{v}_{k}}{(I-A)\bracket{HA\mathbf{v}_{k}-H\mathbf{x}^{*}}}_{P_{k}-I} \\
	+& 2\alpha^{2} \inp*{(I\! - \! A) \! \bracket{H\mathbf{v}_{k}-HA\mathbf{v}_{k}}\! }{\! (I \! - \! A)\! \bracket{HA\mathbf{v}_{k}-H\mathbf{x}^{*}}}_{P_{k}-I}
\end{align*}
Next we use the fact that $\snorm{a}_{P_{k}-I} \leq (\overline{p}-1)\snorm{a}$ and $\inp*{a}{b}_{P_{k}-I} \leq (\overline{p}-1)\norm{a}\norm{b}$. Thus,
\begin{align*}
&\snorm{\mathbf{v}_{k+1}-A\mathbf{v}_{k+1}}_{P_{k+1}} \\
&\quad \leq \snorm{\mathbf{v}_{k}-A\mathbf{v}_{k}}_{P_{k}} - \snorm{\mathbf{v}_{k}-A\mathbf{v}_{k}} \\
&\quad + \alpha^{2}(\overline{p}-1)L^{2} \snorm{\mathbf{v}_{k}-A\mathbf{v}_{k}} \\
	&\quad + \alpha^{2}(\overline{p}-1)L^{2} \snorm{A\mathbf{v}_{k}-\mathbf{x}^{*}} \\
	&\quad +2\alpha (\overline{p}-1)L \snorm{\mathbf{v}_{k}-A\mathbf{v}_{k}} \\
	&\quad +2\alpha (\overline{p}-1)L \norm{\mathbf{v}_{k}-A\mathbf{v}_{k}}\norm{A\mathbf{v}_{k}-\mathbf{x}^{*}} \\
	&\quad +2\alpha^{2}(\overline{p}-1)L^{2} \norm{\mathbf{v}_{k}-A\mathbf{v}_{k}}\norm{A\mathbf{v}_{k}-\mathbf{x}^{*}}
\end{align*}
where we used the fact that $H$ is $L$-Lipschitz from Assumption \ref{asmp:f_and_F_conditions}. Rearranging the terms we get that,
\begin{align}
&\snorm{\mathbf{v}_{k+1}-A\mathbf{v}_{k+1}}_{P_{k+1}} \notag \\
&\leq \snorm{\mathbf{v}_{k}-A\mathbf{v}_{k}}_{P_{k}} \notag \\
	&\quad + \bracket{-1 +2\alpha (\overline{p}-1)L + \alpha^{2}(\overline{p}-1)L^{2} }\snorm{\mathbf{v}_{k}-A\mathbf{v}_{k}} \notag \\
	&\quad +2\alpha (\overline{p}-1)L\bracket{1+\alpha L} \norm{\mathbf{v}_{k}-A\mathbf{v}_{k}}\norm{A\mathbf{v}_{k}-\mathbf{x}^{*}} \notag \\
	&\quad + \alpha^{2}(\overline{p}-1)L^{2} \snorm{A\mathbf{v}_{k}-\mathbf{x}^{*}} \label{eqn:V_1}
\end{align}

From \eqref{eqn:compact_W_truthful} and \eqref{eqn:compact_gradient}, the second term in \eqref{eqn:lyap} is equal to
\begin{align*}
& \snorm{A\mathbf{v}_{k+1} - \mathbf{x}^{*}} \\
&= \snorm{A\mathbf{v}_{k+1} - \mathbf{x}^{*}} = \snorm{A\overline{W}_{k}\mathbf{x}_{k} - \mathbf{x}^{*}} = \snorm{A\mathbf{x}_{k} - \mathbf{x}^{*}} \\
&= \snorm{A(1-\alpha H)\mathbf{v}_{k} - (1-\alpha H)\mathbf{x}^{*}} \\
&= \snorm{(A\mathbf{v}_{k}-\mathbf{x}^{*}) - \alpha (AH\mathbf{v}_{k} - AHA\mathbf{v}_{k} + AHA\mathbf{v}_{k} - AH\mathbf{x}^{*})} \\
&= \snorm{A\mathbf{v}_{k}-\mathbf{x}^{*}} + \alpha^{2} \snorm{AH\mathbf{v}_{k} - AHA\mathbf{v}_{k}} \\
	&\quad + \alpha^{2}\snorm{AHA\mathbf{v}_{k} - AH\mathbf{x}^{*}} \\
	&\quad -2\alpha \inp*{A\mathbf{v}_{k}-\mathbf{x}^{*}}{AH\mathbf{v}_{k} - AHA\mathbf{v}_{k}} \\
	&\quad -2\alpha \inp*{A\mathbf{v}_{k}-\mathbf{x}^{*}}{AHA\mathbf{v}_{k} - AH\mathbf{x}^{*}} \\
	&\quad + 2\alpha^{2} \inp*{AH\mathbf{v}_{k} - AHA\mathbf{v}_{k}}{AHA\mathbf{v}_{k} - AH\mathbf{x}^{*}}
\end{align*}
Note that $\norm{A} = 1$, and $\inp*{A\mathbf{v}_{k}-\mathbf{x}^{*}}{AHA\mathbf{v}_{k} - AH\mathbf{x}^{*}} \geq \mu \snorm{A\mathbf{v}_{k}-\mathbf{x}^{*}}$  then,
\begin{align*}
& \snorm{A\mathbf{v}_{k+1} - \mathbf{x}^{*}} \\
&\leq \snorm{A\mathbf{v}_{k}-\mathbf{x}^{*}} + \alpha^{2}L^{2} \snorm{\mathbf{v}_{k} - A\mathbf{v}_{k}} + \alpha^{2}L^{2}\snorm{A\mathbf{v}_{k} - \mathbf{x}^{*}} \\
	&\quad + 2\alpha L \norm{A\mathbf{v}_{k}-\mathbf{x}^{*}}\norm{\mathbf{v}_{k} - A\mathbf{v}_{k}} -2\alpha \mu \snorm{A\mathbf{v}_{k}-\mathbf{x}^{*}} \\
	&\quad + 2\alpha^{2}L^{2} \norm{\mathbf{v}_{k} - A\mathbf{v}_{k}}\norm{A\mathbf{v}_{k} - \mathbf{x}^{*}}
\end{align*}
rearranging terms we get that
\begin{align}
& \snorm{A\mathbf{v}_{k+1} - \mathbf{x}^{*}} - \snorm{A\mathbf{v}_{k}-\mathbf{x}^{*}} \notag \\
&\leq \bracket{-2\alpha \mu + \alpha^{2}L^{2} }\snorm{A\mathbf{v}_{k}-\mathbf{x}^{*}} \notag \\ 
	&\quad + 2\alpha L \bracket{1+\alpha L} \norm{A\mathbf{v}_{k}-\mathbf{x}^{*}}\norm{\mathbf{v}_{k} - A\mathbf{v}_{k}} \notag \\
	&\quad + \alpha^{2}L^{2} \snorm{\mathbf{v}_{k} - A\mathbf{v}_{k}} \label{eqn:V_2}
\end{align}
Combining the inequalities \eqref{eqn:V_1} and \eqref{eqn:V_2}, $V_{k+1}$ is bounded by
\begin{align*}
	V_{k+1} - V_{k} &\leq -\varpi^{T}_{k} \, M \, \varpi^{T}_{k}
\end{align*}
where $\varpi_{k} = \col(\norm{A\mathbf{v}_{k} - \mathbf{x}^{*}}, \norm{\mathbf{v}_{k} - A\mathbf{v}_{k}})$ and $M$ is as in \eqref{eq_M}. Since  $M$ is positive definite, then $V_{k+1} < V_{k}$, for any  $\varpi_{k} \neq \mathbf{0}$. Note that when $\varpi_{k} = \mathbf{0}$ then $\mathbf{v}_{k}$ is at consensus and $\mathbf{v}_{k} = \mathbf{x}^{*}=\mathbf{1}\otimes x^{*}$. Hence, by Theorem 13.11, \cite{Haddad},  $\mathbf{v}_{k}$ converges to $ \mathbf{1}\otimes x^{*}$ and, by continuity, from \eqref{eqn:compact_gradient}, $\mathbf{x}_{k+1} $ converges to $\mathbf{1}\otimes x^{*} -\alpha \mathcal{R}^{T}\mathbf{F}(\mathbf{1}\otimes x^{*}) = \mathbf{1}\otimes x^{*}$,  where $x^*$ is the NE.
\end{proof}

\begin{remark}
Note  that $M_{11}>0$ and $M_{22}>0$ for small $\alpha$. Additionally, $M_{11}M_{22} \propto \alpha$ and $M_{12}M_{21}\propto \alpha^{2}$. Therefore, there exists a sufficiently small $\alpha$ such that $M_{11}M_{22} > M_{12}M_{21}$ holds, hence $M$ is positive definite. 
\end{remark}

\section{Simulation} \label{sec:sim}

In this section we consider a sensor network/robot formation problem modeled as a game \cite{sensor}. The objective of each agent is to be near a subset of agents while at the same time staying close to their prescribed location. We consider a group of $96$ mobile robots in the plane, where the cost function for each agent is
\begin{align*}
	J_{i}(x_{i},x_{-i}) &= a(x) + r_{i}(x)
\end{align*}
where $a(x) = \frac{1}{2}\snorm{\frac{1}{N}\sum_{j\in\mathcal{N}} x_{j} - Q}$ is the  distance of the average 
from the target $Q$ location, and $r_{i}(x) = \sum_{j\in \mathcal{N}^{in}(\mathcal{G}_{cost})} \frac{1}{2}\snorm{x_{i}-x_{j}-d_{ij}}$ quantifies  the cost for being more than $d_{ij}$ units away from its neighbour $x_{j}$. In this example we set $Q = 0$ and $G_{cost}$ as in Figure \ref{fig:g_cost}.

\vspace{2mm}
\begin{minipage}[ht]{1\columnwidth}
\center
\begin{tikzpicture}[scale=0.6]
\def\nSize{0.4}
\def\truth{blue!60!white}
\def\liar{red!70!white}

\draw[draw, dashed, fill=green!20!white] (3.75,2.75) rectangle (8.25,7.25);

\node[draw, circle, scale=\nSize, \truth, fill=\truth] (n2) at (1,0) {};
\node[draw, circle, scale=\nSize, \truth, fill=\truth] (n3) at (2,0) {};
\node[draw, circle, scale=\nSize, \truth, fill=\truth] (n4) at (3,0) {};
\node[draw, circle, scale=\nSize, \truth, fill=\truth] (n5) at (4,0) {};
\node[draw, circle, scale=\nSize, \truth, fill=\truth] (n6) at (5,0) {};
\node[draw, circle, scale=\nSize, \liar, fill=\liar] (n7) at (6,0) {};
\node[draw, circle, scale=\nSize, \truth, fill=\truth] (n8) at (7,0) {};
\node[draw, circle, scale=\nSize, \truth, fill=\truth] (n9) at (8,0) {};

\node[draw, circle, scale=\nSize, \liar, fill=\liar] (n11) at (0,1) {};
\node[draw, circle, scale=\nSize, \truth, fill=\truth] (n12) at (1,1) {};
\node[draw, circle, scale=\nSize, \liar, fill=\liar] (n13) at (2,1) {};
\node[draw, circle, scale=\nSize, \truth, fill=\truth] (n14) at (3,1) {};
\node[draw, circle, scale=\nSize, \truth, fill=\truth] (n15) at (4,1) {};
\node[draw, circle, scale=\nSize, \truth, fill=\truth] (n16) at (5,1) {};
\node[draw, circle, scale=\nSize, \truth, fill=\truth] (n17) at (6,1) {};
\node[draw, circle, scale=\nSize, \truth, fill=\truth] (n18) at (7,1) {};
\node[draw, circle, scale=\nSize, \truth, fill=\truth] (n19) at (8,1) {};
\node[draw, circle, scale=\nSize, \liar, fill=\liar] (n20) at (9,1) {};

\node[draw, circle, scale=\nSize, \truth, fill=\truth] (n21) at (0,2) {};
\node[draw, circle, scale=\nSize, \truth, fill=\truth] (n22) at (1,2) {};
\node[draw, circle, scale=\nSize, \truth, fill=\truth] (n23) at (2,2) {};
\node[draw, circle, scale=\nSize, \truth, fill=\truth] (n24) at (3,2) {};
\node[draw, circle, scale=\nSize, \truth, fill=\truth] (n25) at (4,2) {};
\node[draw, circle, scale=\nSize, \truth, fill=\truth] (n26) at (5,2) {};
\node[draw, circle, scale=\nSize, \truth, fill=\truth] (n27) at (6,2) {};
\node[draw, circle, scale=\nSize, \truth, fill=\truth] (n28) at (7,2) {};
\node[draw, circle, scale=\nSize, \truth, fill=\truth] (n29) at (8,2) {};
\node[draw, circle, scale=\nSize, \truth, fill=\truth] (n30) at (9,2) {};

\node[draw, circle, scale=\nSize, \liar, fill=\liar] (n31) at (0,3) {};
\node[draw, circle, scale=\nSize, \truth, fill=\truth] (n32) at (1,3) {};
\node[draw, circle, scale=\nSize, \truth, fill=\truth] (n33) at (2,3) {};
\node[draw, circle, scale=\nSize, \truth, fill=\truth] (n34) at (3,3) {};
\node[draw, circle, scale=\nSize, \truth, fill=\truth] (n35) at (4,3) {};
\node[draw, circle, scale=\nSize, \truth, fill=\truth] (n36) at (5,3) {};
\node[draw, circle, scale=\nSize, \truth, fill=\truth] (n37) at (6,3) {};
\node[draw, circle, scale=\nSize, \truth, fill=\truth] (n38) at (7,3) {};
\node[draw, circle, scale=\nSize, \truth, fill=\truth] (n39) at (8,3) {};
\node[draw, circle, scale=\nSize, \truth, fill=\truth] (n40) at (9,3) {};

\node[draw, circle, scale=\nSize, \truth, fill=\truth] (n41) at (0,4) {};
\node[draw, circle, scale=\nSize, \truth, fill=\truth] (n42) at (1,4) {};
\node[draw, circle, scale=\nSize, \truth, fill=\truth] (n43) at (2,4) {};
\node[draw, circle, scale=\nSize, \truth, fill=\truth] (n44) at (3,4) {};
\node[draw, circle, scale=\nSize, \truth, fill=\truth] (n45) at (4,4) {};
\node[draw, circle, scale=\nSize, \truth, fill=\truth] (n46) at (5,4) {};
\node[draw, circle, scale=\nSize, \truth, fill=\truth] (n47) at (6,4) {};
\node[draw, circle, scale=\nSize, \truth, fill=\truth] (n48) at (7,4) {};
\node[draw, circle, scale=\nSize, \liar, fill=\liar] (n49) at (8,4) {};
\node[draw, circle, scale=\nSize, \truth, fill=\truth] (n50) at (9,4) {};

\node[draw, circle, scale=\nSize, \truth, fill=\truth] (n51) at (0,5) {};
\node[draw, circle, scale=\nSize, \truth, fill=\truth] (n52) at (1,5) {};
\node[draw, circle, scale=\nSize, \truth, fill=\truth] (n53) at (2,5) {};
\node[draw, circle, scale=\nSize, \truth, fill=\truth] (n54) at (3,5) {};
\node[draw, circle, scale=\nSize, \truth, fill=\truth] (n55) at (4,5) {};
\node[draw, circle, scale=\nSize, \truth, fill=\truth] (n56) at (5,5) {};
\node[draw, circle, scale=0.6, \truth, fill=blue!80!white] (n57) at (6,5) {};
\node[draw, circle, scale=\nSize, \truth, fill=\truth] (n58) at (7,5) {};
\node[draw, circle, scale=\nSize, \truth, fill=\truth] (n59) at (8,5) {};
\node[draw, circle, scale=\nSize, \truth, fill=\truth] (n60) at (9,5) {};

\node[draw, circle, scale=\nSize, \truth, fill=\truth] (n61) at (0,6) {};
\node[draw, circle, scale=\nSize, \truth, fill=\truth] (n62) at (1,6) {};
\node[draw, circle, scale=\nSize, \truth, fill=\truth] (n63) at (2,6) {};
\node[draw, circle, scale=\nSize, \liar, fill=\liar] (n64) at (3,6) {};
\node[draw, circle, scale=\nSize, \truth, fill=\truth] (n65) at (4,6) {};
\node[draw, circle, scale=\nSize, \truth, fill=\truth] (n66) at (5,6) {};
\node[draw, circle, scale=\nSize, \truth, fill=\truth] (n67) at (6,6) {};
\node[draw, circle, scale=\nSize, \truth, fill=\truth] (n68) at (7,6) {};
\node[draw, circle, scale=\nSize, \truth, fill=\truth] (n69) at (8,6) {};
\node[draw, circle, scale=\nSize, \truth, fill=\truth] (n70) at (9,6) {};

\node[draw, circle, scale=\nSize, \truth, fill=\truth] (n71) at (0,7) {};
\node[draw, circle, scale=\nSize, \truth, fill=\truth] (n72) at (1,7) {};
\node[draw, circle, scale=\nSize, \truth, fill=\truth] (n73) at (2,7) {};
\node[draw, circle, scale=\nSize, \liar, fill=\liar] (n74) at (3,7) {};
\node[draw, circle, scale=\nSize, \truth, fill=\truth] (n75) at (4,7) {};
\node[draw, circle, scale=\nSize, \truth, fill=\truth] (n76) at (5,7) {};
\node[draw, circle, scale=\nSize, \liar, fill=\liar] (n77) at (6,7) {};
\node[draw, circle, scale=\nSize, \truth, fill=\truth] (n78) at (7,7) {};
\node[draw, circle, scale=\nSize, \truth, fill=\truth] (n79) at (8,7) {};
\node[draw, circle, scale=\nSize, \truth, fill=\truth] (n80) at (9,7) {};

\node[draw, circle, scale=\nSize, \liar, fill=\liar] (n81) at (0,8) {};
\node[draw, circle, scale=\nSize, \truth, fill=\truth] (n82) at (1,8) {};
\node[draw, circle, scale=\nSize, \truth, fill=\truth] (n83) at (2,8) {};
\node[draw, circle, scale=\nSize, \truth, fill=\truth] (n84) at (3,8) {};
\node[draw, circle, scale=\nSize, \truth, fill=\truth] (n85) at (4,8) {};
\node[draw, circle, scale=\nSize, \truth, fill=\truth] (n86) at (5,8) {};
\node[draw, circle, scale=\nSize, \truth, fill=\truth] (n87) at (6,8) {};
\node[draw, circle, scale=\nSize, \truth, fill=\truth] (n88) at (7,8) {};
\node[draw, circle, scale=\nSize, \truth, fill=\truth] (n89) at (8,8) {};
\node[draw, circle, scale=\nSize, \liar, fill=\liar] (n90) at (9,8) {};

\node[draw, circle, scale=\nSize, \truth, fill=\truth] (n92) at (1,9) {};
\node[draw, circle, scale=\nSize, \truth, fill=\truth] (n93) at (2,9) {};
\node[draw, circle, scale=\nSize, \truth, fill=\truth] (n94) at (3,9) {};
\node[draw, circle, scale=\nSize, \truth, fill=\truth] (n95) at (4,9) {};
\node[draw, circle, scale=\nSize, \truth, fill=\truth] (n96) at (5,9) {};
\node[draw, circle, scale=\nSize, \truth, fill=\truth] (n97) at (6,9) {};
\node[draw, circle, scale=\nSize, \truth, fill=\truth] (n98) at (7,9) {};
\node[draw, circle, scale=\nSize, \liar, fill=\liar] (n99) at (8,9) {};

\path[-,line width=0.20mm]
	(n2) edge (n3)
	(n3) edge (n4)
	(n4) edge (n5)
	(n5) edge (n6)
	(n6) edge (n7)
	(n7) edge (n8)
	(n8) edge (n9)

	(n11) edge (n12)
	(n12) edge (n13)
	(n13) edge (n14)
	(n14) edge (n15)
	(n15) edge (n16)
	(n16) edge (n17)
	(n17) edge (n18)
	(n18) edge (n19)
	(n19) edge (n20)

	(n21) edge (n22)
	(n22) edge (n23)
	(n23) edge (n24)
	(n24) edge (n25)
	(n25) edge (n26)
	(n26) edge (n27)
	(n27) edge (n28)
	(n28) edge (n29)
	(n29) edge (n30)

	(n31) edge (n32)
	(n32) edge (n33)
	(n33) edge (n34)
	(n34) edge (n35)
	(n35) edge (n36)
	(n36) edge (n37)
	(n37) edge (n38)
	(n38) edge (n39)
	(n39) edge (n40)

	(n41) edge (n42)
	(n42) edge (n43)
	(n43) edge (n44)
	(n44) edge (n45)
	(n45) edge (n46)
	(n46) edge (n47)
	(n47) edge (n48)
	(n48) edge (n49)
	(n49) edge (n50)

	(n51) edge (n52)
	(n52) edge (n53)
	(n53) edge (n54)
	(n54) edge (n55)
	(n55) edge (n56)
	(n56) edge (n57)
	(n57) edge (n58)
	(n58) edge (n59)
	(n59) edge (n60)

	(n61) edge (n62)
	(n62) edge (n63)
	(n63) edge (n64)
	(n64) edge (n65)
	(n65) edge (n66)
	(n66) edge (n67)
	(n67) edge (n68)
	(n68) edge (n69)
	(n69) edge (n70)

	(n71) edge (n72)
	(n72) edge (n73)
	(n73) edge (n74)
	(n74) edge (n75)
	(n75) edge (n76)
	(n76) edge (n77)
	(n77) edge (n78)
	(n78) edge (n79)
	(n79) edge (n80)

	(n81) edge (n82)
	(n82) edge (n83)
	(n83) edge (n84)
	(n84) edge (n85)
	(n85) edge (n86)
	(n86) edge (n87)
	(n87) edge (n88)
	(n88) edge (n89)
	(n89) edge (n90)

	(n92) edge (n93)
	(n93) edge (n94)
	(n94) edge (n95)
	(n95) edge (n96)
	(n96) edge (n97)
	(n97) edge (n98)
	(n98) edge (n99)

	(n2) edge (n12)
	(n3) edge (n13)
	(n4) edge (n14)
	(n5) edge (n15)
	(n6) edge (n16)
	(n7) edge (n17)
	(n8) edge (n18)
	(n9) edge (n19)
	
	(n11) edge (n21)
	(n12) edge (n22)
	(n13) edge (n23)
	(n14) edge (n24)
	(n15) edge (n25)
	(n16) edge (n26)
	(n17) edge (n27)
	(n18) edge (n28)
	(n19) edge (n29)

	(n20) edge (n30)
	(n21) edge (n31)
	(n22) edge (n32)
	(n23) edge (n33)
	(n24) edge (n34)
	(n25) edge (n35)
	(n26) edge (n36)
	(n27) edge (n37)
	(n28) edge (n38)
	(n29) edge (n39)

	(n30) edge (n40)
	(n31) edge (n41)
	(n32) edge (n42)
	(n33) edge (n43)
	(n34) edge (n44)
	(n35) edge (n45)
	(n36) edge (n46)
	(n37) edge (n47)
	(n38) edge (n48)
	(n39) edge (n49)

	(n40) edge (n50)
	(n41) edge (n51)
	(n42) edge (n52)
	(n43) edge (n53)
	(n44) edge (n54)
	(n45) edge (n55)
	(n46) edge (n56)
	(n47) edge (n57)
	(n48) edge (n58)
	(n49) edge (n59)

	(n50) edge (n60)
	(n51) edge (n61)
	(n52) edge (n62)
	(n53) edge (n63)
	(n54) edge (n64)
	(n55) edge (n65)
	(n56) edge (n66)
	(n57) edge (n67)
	(n58) edge (n68)
	(n59) edge (n69)

	(n60) edge (n70)
	(n61) edge (n71)
	(n62) edge (n72)
	(n63) edge (n73)
	(n64) edge (n74)
	(n65) edge (n75)
	(n66) edge (n76)
	(n67) edge (n77)
	(n68) edge (n78)
	(n69) edge (n79)

	(n70) edge (n80)
	(n71) edge (n81)
	(n72) edge (n82)
	(n73) edge (n83)
	(n74) edge (n84)
	(n75) edge (n85)
	(n76) edge (n86)
	(n77) edge (n87)
	(n78) edge (n88)
	(n79) edge (n89)

	(n80) edge (n90)
	(n82) edge (n92)
	(n83) edge (n93)
	(n84) edge (n94)
	(n85) edge (n95)
	(n86) edge (n96)
	(n87) edge (n97)
	(n88) edge (n98)
	(n89) edge (n99)
;

\end{tikzpicture}
\captionof{figure}{Graph of $\mathcal{G}_{cost}$}
\label{fig:g_cost}
\end{minipage}
\vspace{0mm}

Additionally, $d_{ij}$ are the relative positions induced by Figure \ref{fig:g_cost} where edges are unit distance, i.e., $d_{4,5} = (-1,0)$ is the quadratic cost for agent $4$ not being $1$ unit to the left of $5$. The (12) red nodes on Figure \ref{fig:g_cost} are the adversarial agents and the remaining agents are the truthful agents. We assume that the adversarial agent $i$ is sending $\mathbf{y}^{ji} = \mathbf{x}^{ji} + \sigma(0,1)$ where $\sigma(0,1)$ is a vector of Gaussian noise with mean $0$ and standard deviation $1$. We assume that $\mathcal{G}_{c} = \mathcal{G}_{o}$ and there is an edge from $j$ to $i$ if $\norm{x_{i}-x_{j}}_{\infty} \leq 2$ on Figure \ref{fig:g_cost}. For example, the large blue circle in Figure \ref{fig:g_cost} communicates with all agents insider the green square.

We note that  the step sizes from Theorem \ref{thm:robust_gradient} are very conservative since they are for the worst possible communication graph switching, cf. Remark \ref{remark:conservative}. We run Algorithm 1 with the step size $\alpha = 1/40$. Figure \ref{fig:ne} shows the distance to the NE and Figure \ref{fig:pos} shows the position of a subset of the agents under Algorithm 1.
\begin{figure}[ht]
\centering
\begin{minipage}[ht]{1\columnwidth}
	\centerline{\includegraphics[width=7cm]{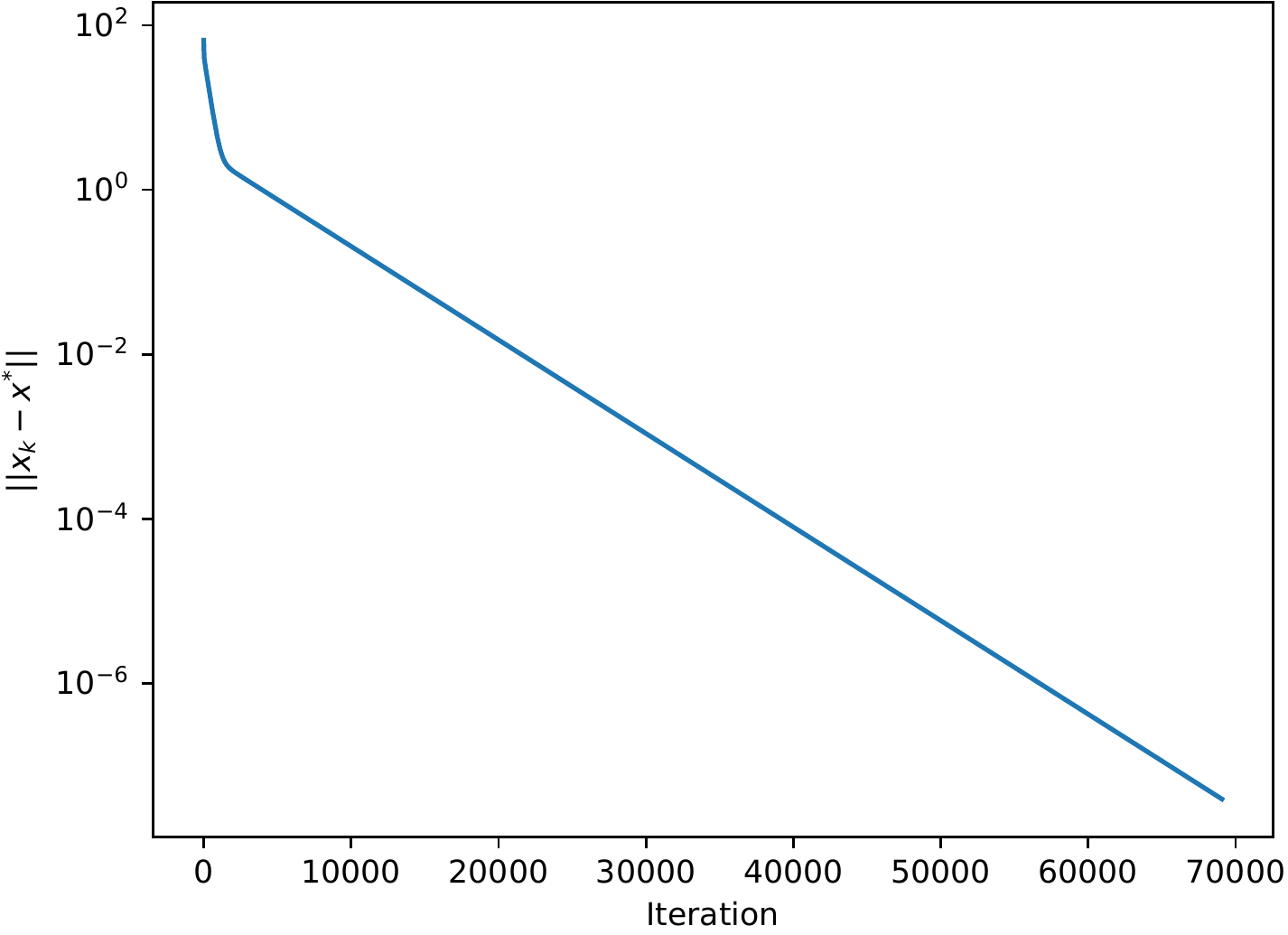}}
	\caption{Distance to the NE}\label{fig:ne}
\end{minipage}
\end{figure}

\begin{figure}[ht]
\centering
\begin{minipage}[ht]{1\columnwidth}
	\centerline{\includegraphics[width=7.5cm]{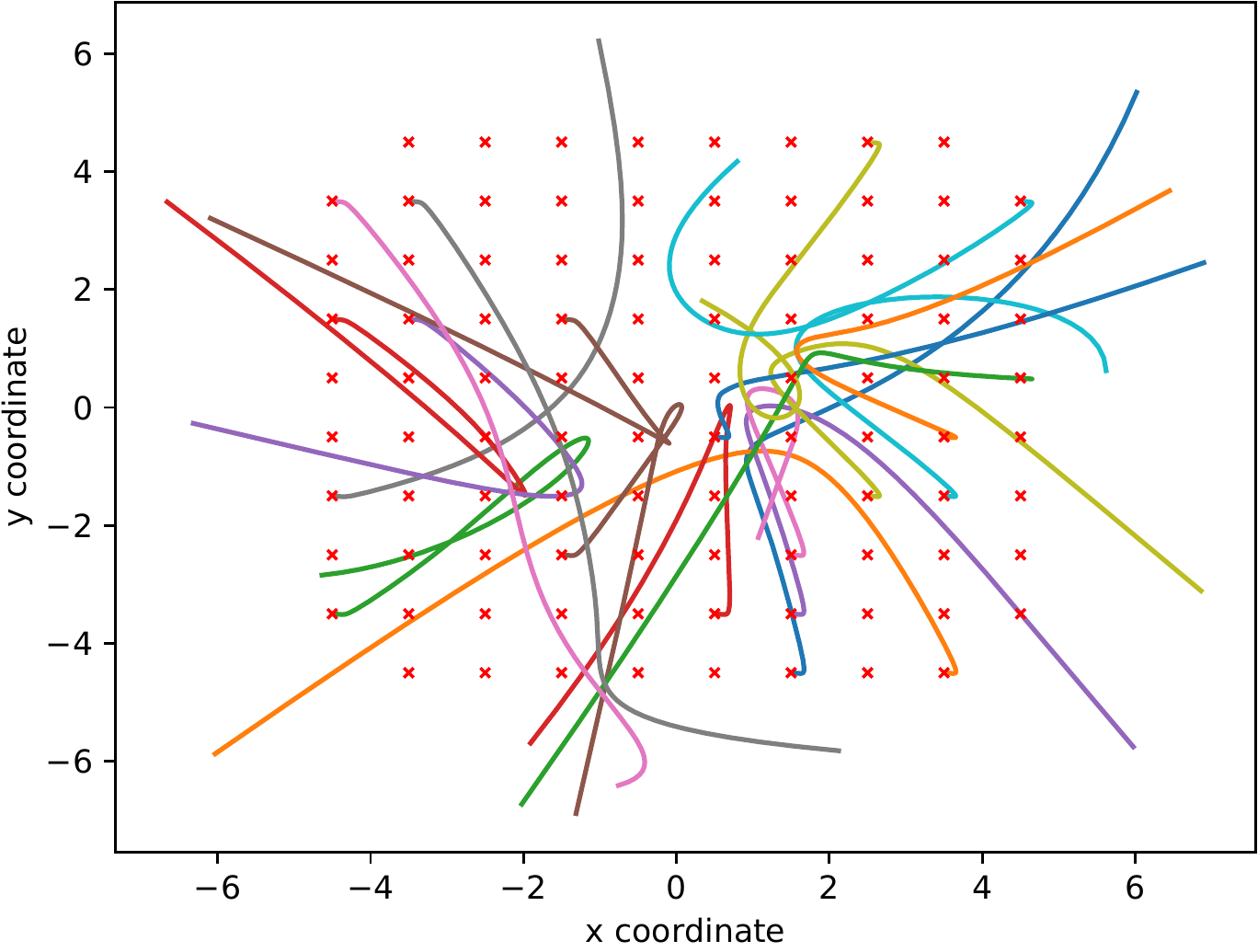}}
	\caption{Position of a subset of agents}\label{fig:pos}
\end{minipage}
\end{figure}

\section{Conclusion} \label{sec:conclusion}
In this work we considered the NE seeking problem in the partial information setting where agents are adversarial. We designed an algorithm that is robust against adversarial agents by utilizing an observation graph. The agents prune out the largest/smallest elements, take a weighted average of the remaining elements, and then preform a gradient step. Under appropriate assumptions about the number of adversarial agents and the connectivity of the graph we are able to prove convergence to the NE. A possible extension to this work would be to design a resilient GNE seeking algorithm, or relaxing some of the assumptions (e.g. Assumption \ref{asmp:adversary_dynamics}).

\bibliographystyle{IEEEtran}
\bibliography{referencesCDC}

\section*{Appendix}

\subsection*{Proof of Lemma \ref{lemma:lyapunov_ltv} }

Let $\Phi(k+s,k) \defeq \overline{W}[k+s]\overline{W}[k+s-1]\cdots \overline{W}[k]$ for all $s\geq 0$ and $\Phi(k-1,k) \defeq I$. Let $\Gamma(k+s,k) \defeq \Phi(k+s,k)^{T}\Phi(k+s,k)$. Let $P[k]\defeq (I-A)^{T}\bracket{\sum_{j=k}^{\infty}\Gamma(j-1,k)}(I-A)$. We can see that,
\begin{align*}
	& \overline{W}[k]^{T}P[k+1]\overline{W}[k] \\
	&= \overline{W}[k]^{T}(I-A)^{T}\bracket{\sum_{j=k+1}^{\infty}\Gamma(j-1,k+1)}(I-A)\overline{W}[k] \\
	&= (I-A)^{T}\overline{W}[k]^{T}\bracket{\sum_{j=k+1}^{\infty}\Gamma(j-1,k+1)}\overline{W}[k](I-A) \\
	&= (I-A)^{T}\bracket{\sum_{j=k}^{\infty}\Gamma(j-1,k) - I}(I-A) \\
	&= P[k] - (I-A)^{T}(I-A)
\end{align*}
Note that $\Gamma(j-1,k)$ has full rank because the graph $\overline{W}[k]$ is strongly connected for all $k$ and $Null(I-A) = \mathbf{1}\otimes v$ for all $v$. Therefore, the $Null(P[k]) = \mathbf{1}\otimes v$.

Next, we find an upper bound on $P[k]$, i.e., $P[k] \preceq \bar{p}I$. 	The following argument holds for all $m\in\mathcal{N}$ and $q\in\set{1,2,\dots,n_{m}}$. Notice that the elements in $\Phi(k+s,k)$ are the sum of all weighted paths from node $i$ to node $j$ about action $m$. From Lemma \ref{lemma:equiv_rooted} we know that for node $m$, the communication graph induced from $\tilde{w}^{ij}_{mq}$ and the observation graph $\mathcal{G}_{o}$, is $1$-information robust for all $k$ and the edges have weight of at least $\frac{\eta}{2}$. Therefore, there exists a path, with at most $N-1$ edges, from $m$ to every node. The element $\phi^{im}_{mq}$ in $\Phi(k+N-1,k)$, representing the path from $m$ to any node $i$ about component $q$ of action $m$, has weight of at least $\bracket{\frac{\eta}{2}}^{N-1} = 1-C$ for any iteration $k$ \footnote{If there is a path of length $b < N-1$ from $m$ to $j$, then $\Phi(k+b,k)$ will have weight $\bracket{\frac{\eta}{2}}^{b}$. Each node has a self loop with weight at least $\frac{\eta}{2}$ which accounts for the remaining $\bracket{\frac{\eta}{2}}^{(N-1)-b}$ factor after $N-1$ iterations}. Let $\tilde{\phi}^{im}_{mq}$ be an element in $\Phi(k+2(N-1),k+N-1)$ and $\hat{\phi}^{im}_{mq}$ in $\Phi(k+2(N-1),k)=\Phi(k+2(N-1),k+N-1)\Phi(k+N-1,k)$. Then,
\begin{align*}
	\hat{\phi}^{im}_{mq} &= \sum_{j=0}^{N}\tilde{\phi}^{ij}_{mq} \phi^{jm}_{mq}  = \tilde{\phi}^{im}_{mq} \phi^{mm}_{mq} + \sum_{j=0, j\neq m}^{N}\tilde{\phi}^{ij}_{mq} \phi^{jm}_{mq} \\
	&\geq \bracket{\frac{\eta}{2}}^{N-1} + \bracket{1-\bracket{\frac{\eta}{2}}^{N-1}}\bracket{\frac{\eta}{2}}^{N-1} \\
	&\geq \bracket{1-C^{2}}
\end{align*}
where we used the fact that $\phi^{mm}_{mq}=1$, $\tilde{\phi}^{im}_{mq}, \phi^{jm}_{mq} \geq \bracket{\frac{\eta}{2}}^{N-1}$ and $\sum_{j=0}^{N}\tilde{\phi}^{ij}_{mq} =1$. By an inductive argument, it can be shown that the weight $\breve{\phi}^{im}_{mq}$ in $\Phi(k+r(N-1),k)$ is at least $\bracket{1-C^{r}}$ for all $m\in\mathcal{N}$. Note that the rows of $\Phi(k+r(N-1),k)(I-A)$ are of the form,
\begin{align*}
	\begin{bmatrix}
	\breve{\phi}^{i1}_{mq}, \dots \breve{\phi}^{i,m-1}_{mq}, \breve{\phi}^{im}_{mq}-1, \breve{\phi}^{i,m+1}_{mq}, \dots, \breve{\phi}^{iN}_{mq}
	\end{bmatrix}
\end{align*}
Therefore, we know that the infinity norm is,
\begin{align*}
	&\qquad\norm{\bracket{\Phi(k+r(N-1),k)-A}}_{\infty} \\
	&= \max_{i,m}\bracket{ 1-\breve{\phi}^{im}_{mq} + \sum_{j\neq m} \breve{\phi}^{ij}_{mq}} = 2C^{r}
\end{align*}
Using this bound we know that,
\begin{align*}
	\norm{P[k]} &= \norm{\sum_{j=k}^{\infty}(I-A)^{T}\Gamma(j-1,k)(I-A)} \\
	&\leq \sum_{j=k}^{\infty}\norm{(I-A)^{T}\Gamma(j-1,k)(I-A)} \\
	&= \sum_{j=k}^{\infty}\snorm{\Phi(j-1,k)(I-A)} \\
	&\leq N\sum_{j=k}^{\infty}\snorm{\Phi(j-1,k)(I-A)}_{\infty} \\
	&= 4N\sum_{j=k}^{\infty}C^{\frac{2(j-k)}{N-1}} = \frac{4N}{1-C^{\frac{2}{N-1}}}
\end{align*}
where the second last line follows from $\norm{M}_{2} \leq \sqrt{q}\norm{M}_{\infty}$, $M\in\reals^{q\times q}$ and the fact that $\Phi$ is made of $n$ independent graphs of size $N$. \hfill \QEDclosed

\end{document}